\newtheorem{thm}{Theorem}[section]
\newtheorem{lemma}[thm]{Lemma}
\newtheorem{prop}[thm]{Proposition}
\newtheorem{cor}[thm]{Corollary}
\newtheorem*{prop*}{Proposition}
\newtheorem*{lemma*}{Lemma}
\theoremstyle{remark}
\newtheorem{example}[thm]{Example}
\newtheorem{remark}[thm]{Remark}
\def\N{{\mathbb N}}
\def\H{{\mathcal H}}
\def\K{{\mathcal K}}
\def\Z{{\mathbb Z}}
\newcommand{\clsp}{\overline{\operatorname{span}}}
\newcommand{\lsp}{\operatorname{span}}
\newcommand{\End}{\operatorname{End}}
\newcommand{\range}{\operatorname{range}}
\newcommand{\lt}{\operatorname{lt}}
\numberwithin{equation}{section}
\numberwithin{thm}{section}
\begin{document}

\title[Higher-rank graphs]{Skew-products of higher-rank graphs\\ and crossed products by semigroups}

\author{Ben Maloney}

\address{Ben Maloney, School of Mathematics and Applied Statistics,
University of Wollongong, NSW 2522, Australia}
\email{bkm611@uowmail.edu.au}

\author{David Pask}
\address{David Pask, School of Mathematics and Applied Statistics,
University of Wollongong, NSW 2522, Australia}
\email{dpask@uow.edu.au}

\author[Iain Raeburn]{Iain Raeburn}

\address{Iain Raeburn, Department of Mathematics and Statistics, University of Otago, PO Box~56, Dunedin 9054, New Zealand}
\email{iraeburn@maths.otago.ac.nz}

\date{\today}
\thanks{This research was supported by the Australian Research Council and the University of Otago.}

\begin{abstract}
We consider a free action of an Ore semigroup on a higher-rank graph, and the induced action by endomorphisms of the $C^*$-algebra of the graph. We show that the crossed product by this action is stably isomorphic to the $C^*$-algebra of a quotient graph. Our main tool is Laca's dilation theory for endomorphic actions of Ore semigroups on $C^*$-algebras, which embeds such an action in an automorphic action of the enveloping group on a larger $C^*$-algebra.
\end{abstract}

\maketitle

\section{Introduction}

Kumjian and Pask \cite{KP1} proved that if a group $G$ acts freely on a directed graph $E$, then the associated crossed product $C^*(E)\rtimes G$ of the graph algebra is stably isomorphic to the graph algebra $C^*(G\backslash E)$ of the quotient graph. Their theorem has been extended in several directions: to actions of groups on higher-rank graphs (\cite[Theorem~5.7]{KP2} and \cite[Corollary~7.5]{PQR}), and to actions of Ore semigroups on directed graphs \cite{PRY}. Here we consider actions of Ore semigroups on higher-rank graphs.

Our main theorem directly extends that of \cite{PRY} to higher-rank graphs, but our proof has some interesting new features. First of these is our more efficient use of Laca's dilation theory for endomorphic actions \cite{Laca}: by exploiting his uniqueness theorem, we have been able to bypass the complicated direct-limit constructions used in \cite{PRY}. Second, we have found an explicit isomorphism. In searching for explicit formulas, we have revisited the case of group actions, and we think a third feature of general interest is our direct approach to crossed products of the $C^*$-algebras of skew-product graphs, which is based on the treatment of skew products of directed graphs in \cite[\S3]{KQR} (see Theorem~\ref{cpofskewp}).

After a brief discussion of notation and background material, we discuss higher-rank graphs and their $C^*$-algebras in \S\ref{sec-hrgraphs}, and prove two general lemmas about the $C^*$-algebras of higher-rank graphs. In \S\ref{sec-GT}, we prove our first results about actions of semigroups, including a version of the Gross-Tucker theorem which will allow us to replace the underlying graph with a skew product. In \S\ref{sec-dilate} we apply Laca's dilation theory to higher-rank graph algebras. In \S\ref{sec-group} we discuss group actions on skew products, and then in \S\ref{sec-main} we pull the pieces together and prove our main theorem.

\subsection*{Background and notation}

All semigroups in this paper are countable and have an identity $1$. An \emph{Ore semigroup} is a cancellative semigroup such that for all pairs $t,u\in S$, there exist $x,y\in S$ such that $xt=yu$. Ore and Dubreil proved that a semigroup is Ore if and only if it can be embedded in a group $\Gamma$ such that $\Gamma=S^{-1}S$; the group $\Gamma$ is unique up to isomorphism, and we call it the \emph{enveloping group} of $S$.

An action of a semigroup $S$ on a $C^*$-algebra $A$ is an identity-preserving homomorphism $\alpha$ of $S$ into the semigroup $\End A$ of endomorphisms of $A$. A \emph{covariant representation} of $(A,S,\alpha)$ in in a $C^*$-algebra $B$ consists of a nondegenerate homomorphism $\pi\to B$ and a homomorphism $V$ of $S$ into the semigroup of isometries in $M(B)$, such that $\pi(\alpha_t(a))=V_t\pi(a)V_t^*$ for $a\in A$ and $t\in S$. The \emph{crossed product} $A\times_\alpha S$ is generated by a universal covariant representation $(i_A,i_S)$ in $A\times_\alpha S$. (In the recent literature, this is called the ``Stacey crossed product''.)  When $S=\Gamma$, the endomorphisms are automorphisms, and we recover the usual crossed product $(A\rtimes_\alpha \Gamma,i_A,i_\Gamma)$. If $(\pi,V)$ is a covariant representation of $(A,S,\alpha)$ in $B$, then we write $\pi\times V$ for the homomorphism of $A\times_\alpha S$ into $B$ such that $(\pi\times V)\circ i_A=\pi$ and $(\pi\times V)\circ i_S=V$.

To talk about stable isomorphisms, we need to consider tensor products with the algebra $\K(\H)$ of compact operators. Since $\K(\H)$ is nuclear, there is no ambiguity in writing $A\otimes \K(\H)$. However, we are interested in $C^*$-algebras which have universal properties, and we view $A\otimes \K(\H)$ as the maximal tensor product $A\otimes_{\max} \K(\H)$ which is universal for pairs of commuting representations of $A$ and $\K(\H)$ (see \cite[Theorem~B.27]{RW}).

We write $\lambda$ and $\rho$ for the left- and right-regular representations of a group $\Gamma$ on $l^2(\Gamma)$, and $\{e_g:g\in\Gamma\}$ for the usual orthonormal basis of point masses. For $F\subset\Gamma$, $\chi_F$ is the operator on $l^2(\Gamma)$ which multiplies by the characteristic function of $F$, and $\chi_g:=\chi_{\{g\}}$. We often use the relations $\lambda_h\chi_g=\chi_{hg}\lambda_h$ and $\rho_k\chi_g=\chi_{gk^{-1}}\rho_k$. When $S$ is a subsemigroup of $\Gamma$, we identify $l^2(S)$ with the subspace $\clsp\{e_t:t\in S\}$ of $l^2(\Gamma)$, and then $t\mapsto \lambda^S_t:=\lambda_t|_{l^2(S)}$ is the usual Toeplitz representation of $S$ on $l^2(S)$.

\section{Higher-rank graphs and their $C^*$-algebras}\label{sec-hrgraphs}

Suppose $k\in\N$ and $k\geq 1$. A \emph{graph of rank $k$}, or \emph{$k$-graph}, is a countable category $\Lambda$ with domain and codomain maps $r$ and $s$, together with a functor $d:\Lambda\to \N^k$ satisfying the \emph{factorisation property}: for every $\lambda\in\Lambda$ and decomposition $d(\lambda)=m+n$ with $m,n\in \N^k$, there is a unique pair $(\mu,\nu)$ in $\Lambda\times \Lambda$ such that $s(\mu)=r(\nu)$, $d(\mu)=m$, $d(\nu)=n$ and $\lambda=\mu\nu$. We write $\Lambda^0$ for the set of objects, and observe that the factorisation property allows us to identify $\Lambda^0$ with $d^{-1}(0)$; then we write $\Lambda^n:=d^{-1}(n)$ for $n\in \N^k$. Visualisations of $k$-graphs are discussed in \cite{RSY} and \cite[Chapter 10]{cbms}: we think of $\Lambda^0$ as the set of vertices, and $\lambda\in \Lambda^n$ as a path of degree $n$ from $s(\lambda)$ to $r(\lambda)$.

As in \cite{KP2}, we assume throughout that our $k$-graphs are \emph{row-finite} and \emph{have no sources}, in the sense that $v\Lambda^n:=r^{-1}(v)\cap \Lambda^n$ is finite and nonempty for every $v\in \Lambda^0$, $n\in\N^k$.

Given a $k$-graph $\Lambda$, a \emph{Cuntz-Krieger $\Lambda$-family} in a $C^*$-algebra $B$ consists of partial isometries $\{S_\lambda:\lambda\in\Lambda\}$ in $B$ satisfying
\begin{itemize}\item[]\begin{itemize}
\item[(CK1)] $\{S_v:v\in \Lambda^0\}$ are mutually orthogonal projections;
\item[(CK2)] $S_{\lambda}S_{\mu}=S_{\lambda\mu}$ whenever $s(\lambda)=r(\mu)$;
\item[(CK3)] $S_\lambda^*S_\lambda=S_{s(\lambda)}$ for every $\lambda\in\Lambda$;
\item[(CK4)] $S_v=\sum_{\lambda\in v\Lambda^n}S_\lambda S_\lambda^*$ for every $v\in \Lambda^0$ and $n\in \N^k$.
\end{itemize}\end{itemize}
The graph algebra $C^*(\Lambda)$ is generated by a universal Cuntz-Krieger $\Lambda$-family $\{s_\lambda\}$. When there is more than one graph around, we sometimes write $\{s_\lambda^\Lambda\}$ for emphasis. Each vertex projection $s_v$ (and hence by (CK3) each $s_{\lambda}$) is non-zero \cite[Proposition~2.11]{KP2}, and
\[
C^*(\Lambda)=\clsp\{s_\lambda s_\mu^*:\lambda,\mu\in\Lambda\}\ \text{ (see \cite[Lemma~3.1]{KP2}).}
\]

\begin{lemma}\label{defsumpis}
Suppose that $\Lambda$ is a row-finite $k$-graph with no sources, that $m\in \N^k$, and that $V$ is a subset of $\Lambda^m$ such that the paths in $V$ all have different sources. Let $\{F_n\}$ be an increasing sequence of finite subsets of $V$ such that $V=\bigcup_n F_n$. Then each $s_n:=\sum_{\mu\in F_n} s_\mu$ is a partial isometry, and there is a partial isometry $s_V\in M(C^*(\Lambda))$ such that $s_n\to s_V$ strictly. The limit $s_V$ is independent of the choice of $F_n$, and satisfies
\begin{equation}\label{leftmult}
s_Vs_\alpha s_\beta^*=
\begin{cases}
s_{\mu\alpha} s_\beta^*&\text{if $r(\alpha)=s(\mu)$ for some $\mu\in V$}\\
0&\text{otherwise,}
\end{cases}
\end{equation}
and, for paths $\beta$ with $d(\beta)\geq m$,
\begin{equation}\label{rightmult}
s_\alpha s_\beta^*s_V=
\begin{cases}
s_{\alpha} s_{\beta'}^*&\text{if $\beta=\mu\beta'$ for some $\mu\in V$}\\
0&\text{otherwise.}
\end{cases}
\end{equation}
If $V\subset \Lambda^m$ and $W\subset \Lambda^p$ are two such sets, then $s_Vs_W$ is the partial isometry $s_{VW}$ associated to the set $VW:=\{\mu\nu:\mu\in V,\ \nu\in W\text{ and }s(\mu)=r(\nu)\}$.
\end{lemma}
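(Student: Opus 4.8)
The plan is to construct $s_V$ as a strict limit and then to verify every assertion by testing against the standard spanning elements $s_\alpha s_\beta^*$ of $C^*(\Lambda)$. I would first record the two multiplication rules that drive everything: $s_\mu s_\alpha = s_{\mu\alpha}$ when $s(\mu)=r(\alpha)$ and $s_\mu s_\alpha=0$ otherwise (since $s_{s(\mu)}s_{r(\alpha)}=0$ for distinct vertices), and $s_\mu^* s_\nu = \delta_{\mu,\nu}s_{s(\mu)}$ for $\mu,\nu\in\Lambda^m$ (a standard consequence of (CK4)). Because the paths in $V$ have distinct sources, these rules make $\{s_\mu^*s_\mu\}_{\mu\in F_n}$ and $\{s_\mu s_\mu^*\}_{\mu\in F_n}$ families of mutually orthogonal projections (the first by (CK1)), and the off-diagonal terms $s_\mu^* s_\nu$ and $s_\mu s_\nu^*$ vanish for $\mu\ne\nu$; a one-line computation then gives $s_ns_n^*s_n=s_n$, so each $s_n$ is a partial isometry with $\|s_n\|\le 1$.

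For the existence of $s_V$ I would show that the bounded net $\{s_n\}$ is strictly Cauchy by checking its action on the generators. Left multiplication is immediate: the first rule gives $s_ns_\alpha s_\beta^*=s_{\mu\alpha}s_\beta^*$ when some (necessarily unique) $\mu\in F_n$ satisfies $s(\mu)=r(\alpha)$, and $0$ otherwise, so $s_ns_\alpha s_\beta^*$ is eventually constant and \eqref{leftmult} holds in the limit. For right multiplication I would first treat $\beta$ with $d(\beta)\ge m$: factoring $\beta=\mu'\beta'$ with $d(\mu')=m$ gives $s_\beta^*s_\mu=\delta_{\mu,\mu'}s_{\beta'}^*$, so $s_\alpha s_\beta^* s_n$ is again eventually constant and yields \eqref{rightmult}. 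To handle arbitrary $\beta$ (needed for the strict limit) I would apply (CK4) in the form $s_\alpha s_\beta^*=\sum_{\nu\in s(\beta)\Lambda^m} s_\alpha s_\nu\,s_{\beta\nu}^*$, a finite sum in which every $\beta\nu$ has degree $\ge m$, thereby reducing to the case already done. Boundedness of $\{s_n\}$ then upgrades convergence on the dense span of the $s_\alpha s_\beta^*$ to strict convergence; since $M(C^*(\Lambda))$ is strictly complete, the limit $s_V$ exists, and joint strict continuity of multiplication and of the involution on bounded sets gives $s_V^*s_V=\lim s_n^*s_n$ and $s_Vs_V^*s_V=\lim s_n=s_V$, so $s_V$ is a partial isometry. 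Independence of $\{F_n\}$ is automatic, since the limiting formulas for $s_Va$ and $as_V$ depend only on $V$ and a multiplier is determined by its action on $C^*(\Lambda)$.

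For the final assertion I would avoid recomputation and argue by strict continuity. Fixing exhaustions $F_n\uparrow V$ and $G_n\uparrow W$ and writing $s_{F_n}:=\sum_{\mu\in F_n}s_\mu$ and $s_{G_n}:=\sum_{\nu\in G_n}s_\nu$, the first multiplication rule gives $s_{F_n}s_{G_n}=\sum\{s_{\mu\nu}:\mu\in F_n,\ \nu\in G_n,\ s(\mu)=r(\nu)\}=s_{H_n}$, where $H_n:=F_nG_n$ is a finite subset of $VW$. One checks that $VW\subset\Lambda^{m+p}$ again has distinct sources: if $\mu\nu$ and $\mu'\nu'$ share a source then $s(\nu)=s(\nu')$ forces $\nu=\nu'$, whence $r(\nu)=r(\nu')$ forces $\mu=\mu'$. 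Thus each $s_{H_n}$ is a partial isometry and $H_n\uparrow VW$, so $s_{H_n}\to s_{VW}$ strictly by the first part of the lemma. Applying joint strict continuity to the bounded nets gives $s_Vs_W=\lim_n s_{F_n}s_{G_n}=\lim_n s_{H_n}=s_{VW}$.

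The step I expect to be the main obstacle is the strict convergence under right multiplication, and in particular the reduction from arbitrary $\beta$ to the case $d(\beta)\ge m$ via (CK4); once the two elementary multiplication rules are in place, the remaining work is routine bookkeeping with the Cuntz-Krieger relations.
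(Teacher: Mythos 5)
Your proposal is correct and follows essentially the same route as the paper: partial isometries $s_n$ from the distinct-sources hypothesis, eventual constancy of $s_ns_\alpha s_\beta^*$ and (for $d(\beta)\geq m$) of $s_\alpha s_\beta^*s_n$, with your (CK4) reduction $s_\alpha s_\beta^*=\sum_{\nu\in s(\beta)\Lambda^m}s_{\alpha\nu}s_{\beta\nu}^*$ being exactly the span identity the paper invokes, followed by strict completeness of $M(C^*(\Lambda))$. Your only variations are cosmetic and equally valid: you get $s_Vs_V^*s_V=s_V$ and $s_Vs_W=s_{VW}$ from joint strict continuity of multiplication and the involution on bounded nets (approximating by $s_n$ and $s_{F_nG_n}$), where the paper instead verifies both identities directly on spanning elements via \eqref{leftmult} and the adjoint of \eqref{rightmult}.
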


\begin{proof}
Since  all the $\mu$ have the same degree, (CK3) and (CK4) imply that
\[
s_n^*s_n=\sum_{\mu,\nu\in F_n} s_\mu^*s_\nu=\sum_{\mu\in F_n} s_\mu^*s_\mu = \sum_{\mu\in F_n}s_{s(\mu)};
\]
since $s(\mu)\not=s(\nu)$ for $\mu\not=\nu$ in $V$, this is a sum of mutually orthogonal projections, and hence is a projection. Thus $s_n$ is a partial isometry. For $\alpha,\beta\in \Lambda$, we have
\begin{equation}\label{leftmult2}
s_ns_\alpha s_\beta^*=
\begin{cases}
s_{\mu\alpha} s_\beta^*&\text{if $r(\alpha)=s(\mu)$ for some $\mu\in F_n$}\\
0&\text{otherwise.}
\end{cases}
\end{equation}
If $r(\alpha)=s(\mu)$ for some $\mu\in V$, then $\mu\in F_n$ for large $n$, and hence the right-hand side of \eqref{leftmult2} is eventually constant for every $s_\alpha s_\beta^*$. Now an $\epsilon/3$ argument implies that $\{s_na\}$ is Cauchy for every $a\in C^*(\Lambda)$. A similar calculation shows that $s_\alpha s_\beta^* s_n$ is eventually constant whenever $d(\beta)\geq m$. However, (CK4) and (CK2) imply that
\[
\lsp\big\{s_\alpha s_\beta^*:\alpha,\beta\in \Lambda\big\}
=\lsp\big\{s_\alpha s_\beta^*:\alpha,\beta\in \Lambda,\ d(\beta)\geq m\big\}
\]
so $s_\alpha s_\beta^*s_n$ is eventually constant for all $\alpha,\beta$, and we deduce as before that $\{as_n\}$ is Cauchy for all $a\in C^*(\Lambda)$. Since $M(C^*(\Lambda))$ is complete in the strict topology \cite[Proposition~3.6]{bus}, we deduce that $s_n$ converges strictly to a multiplier $s_V$. Then \eqref{leftmult2} implies \eqref{leftmult}, and similarly for \eqref{rightmult}.

The formula \eqref{leftmult} implies that $s_V$ is independent of the choice of sequence $\{F_n\}$. For $\alpha,\beta\in \Lambda$, \eqref{leftmult} and the adjoint of \eqref{rightmult} show that $s_Vs_V^*s_Vs_\alpha ^*s_\beta=0=s_Vs_\alpha s_\beta^*$ unless $r(\alpha)=s(\mu)$ for some $\mu\in V$, and in that case
\[
s_Vs_V^*s_Vs_\alpha ^*s_\beta=s_Vs_V^*s_{\mu\alpha}s_\beta^*=s_Vs_\alpha s_\beta^*;
\]
either way, we have $s_Vs_V^*s_Vs_\alpha s_\beta^*=s_{V}s_\alpha s_\beta^*$. Thus $s_V s_V^* s_V=s_V$, and $s_V$ is a partial isometry. The final assertion follows from two applications of \eqref{leftmult}.
\end{proof}

\begin{remark}
Lemma~\ref{defsumpis} applies when $m=0$, in which case the summands are projections and so is the limit $s_V$. To emphasise this, we write $p_V$ for $s_V$ when $m=0$.
\end{remark}

A $k$-graph morphism $\pi:\Lambda\to \Sigma$ is \emph{saturated} if $r(\sigma)\in \pi(\Lambda^0)\Longrightarrow \sigma\in \pi(\Lambda)$. Recall (from \cite{A}, for example) that a homomorphism $\phi$ from a $C^*$-algebra to a multiplier algebra $M(B)$ is \emph{extendible} if there are an approximate identity $\{e_i\}$ for $A$ and a projection $p\in M(B)$ such that $\phi(e_i)$ converges strictly to $p$. If so, there is a unique extension $\overline{\phi}:M(A)\to M(B)$, which satisfies $\overline{\phi}(1)=p$ and is strictly continuous. Nondegenerate homomorphisms are extendible with $\overline{\phi}(1)=1$.

\begin{lemma}\label{extendible}
Suppose that $\pi:\Lambda\to \Sigma$ is an injective saturated $k$-graph morphism between row-finite graphs with no sources. Then there is a homomorphism $\pi_*:C^*(\Lambda)\to C^*(\Sigma)$ such that $\pi_*(s^\Lambda_\lambda)=s^\Sigma_{\pi(\lambda)}$, and $\pi_*$ is injective and extendible with $\overline{\pi_*}(1)=p_{\pi(\Lambda^0)}$. The assignment $\pi\mapsto\pi_*$ is functorial: $(\pi\circ\tau)_*=\pi_*\circ\tau_*$.
\end{lemma}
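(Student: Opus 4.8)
The plan is to build $\pi_*$ from the universal property of $C^*(\Lambda)$, to prove injectivity with the gauge-invariant uniqueness theorem, and to read off extendibility from Lemma~\ref{defsumpis}. First I would set $T_\lambda:=s^\Sigma_{\pi(\lambda)}$ for $\lambda\in\Lambda$ and verify that $\{T_\lambda\}$ is a Cuntz-Krieger $\Lambda$-family in $C^*(\Sigma)$. Relations (CK1)--(CK3) are immediate from the corresponding relations for $\{s^\Sigma_\sigma\}$, using that $\pi$ is an injective functor preserving $r$, $s$ and $d$: injectivity gives mutual orthogonality of the $T_v$, functoriality gives $T_\lambda T_\mu=T_{\lambda\mu}$ and $T_\lambda^*T_\lambda=T_{s(\lambda)}$. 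The relation needing the hypotheses in an essential way is (CK4), and I expect this to be the crux of the construction.

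For (CK4), fix $v\in\Lambda^0$ and $n\in\N^k$, and I would show that $\lambda\mapsto\pi(\lambda)$ restricts to a bijection of $v\Lambda^n$ onto $\pi(v)\Sigma^n$: it is injective because $\pi$ is, it maps into $\pi(v)\Sigma^n$ because $\pi$ preserves $r$ and $d$, and it is surjective because any $\sigma\in\pi(v)\Sigma^n$ has $r(\sigma)=\pi(v)\in\pi(\Lambda^0)$, so saturation forces $\sigma=\pi(\lambda)$ with necessarily $\lambda\in v\Lambda^n$. Applying the image under the (CK4) relation for $\Sigma$ at the vertex $\pi(v)$, and reindexing along this bijection, then yields (CK4) for $\{T_\lambda\}$. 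The universal property of $C^*(\Lambda)$ now produces $\pi_*$ with $\pi_*(s^\Lambda_\lambda)=s^\Sigma_{\pi(\lambda)}$. To see that $\pi_*$ is injective I would invoke the gauge-invariant uniqueness theorem \cite{KP2}: every vertex projection in $C^*(\Sigma)$ is nonzero, so each $\pi_*(s^\Lambda_v)=s^\Sigma_{\pi(v)}$ is nonzero; and because $\pi$ preserves degree, $\pi_*$ intertwines the gauge actions of $\T^k$ on $C^*(\Lambda)$ and $C^*(\Sigma)$. These are exactly the hypotheses of the theorem, so $\pi_*$ is injective.

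For extendibility I would choose an increasing sequence of finite sets $V_n$ with $\Lambda^0=\bigcup_n V_n$ and put $e_n:=\sum_{v\in V_n}s^\Lambda_v$. A short computation on the spanning elements $s_\alpha s_\beta^*$ (using (CK1)--(CK3), so that $s_v s_\alpha s_\beta^*=s_\alpha s_\beta^*$ once $r(\alpha)\in V_n$, and similarly on the right) shows that $\{e_n\}$ is an approximate identity for $C^*(\Lambda)$. Then $\pi_*(e_n)=\sum_{v\in V_n}s^\Sigma_{\pi(v)}=\sum_{w\in\pi(V_n)}s^\Sigma_w$, and since $\pi$ is injective the sets $\pi(V_n)$ form an increasing sequence of finite subsets of $\pi(\Lambda^0)$ with union $\pi(\Lambda^0)$. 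Lemma~\ref{defsumpis} with $m=0$, together with the remark following it, then says precisely that these partial sums converge strictly to $p_{\pi(\Lambda^0)}$, so $\pi_*$ is extendible with $\overline{\pi_*}(1)=p_{\pi(\Lambda^0)}$.

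Finally, functoriality is a check on generators: for composable injective saturated morphisms $\tau$ and $\pi$, one first notes that $\pi\circ\tau$ is again injective and saturated (so that $(\pi\circ\tau)_*$ is defined), and then both $(\pi\circ\tau)_*$ and $\pi_*\circ\tau_*$ send $s_\gamma$ to $s_{\pi(\tau(\gamma))}$; since a homomorphism out of $C^*(\Gamma)$ is determined by its values on the generators, the two agree. The main obstacle throughout is the (CK4) verification, where saturation and injectivity of $\pi$ combine to supply the crucial bijection $v\Lambda^n\to\pi(v)\Sigma^n$; the remaining steps are bookkeeping with the universal property, the gauge-invariant uniqueness theorem, and the strict topology.
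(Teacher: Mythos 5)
Your proposal is correct and follows essentially the same route as the paper's proof: the Cuntz--Krieger family via saturation (your bijection $v\Lambda^n\to\pi(v)\Sigma^n$ just spells out the paper's identity $\{\sigma\in\Sigma:r(\sigma)=\pi(v)\}=\{\pi(\lambda):r(\lambda)=v\}$), injectivity via the gauge-invariant uniqueness theorem, extendibility via an approximate identity of vertex projections and Lemma~\ref{defsumpis} with $m=0$, and functoriality on generators. Your explicit remark that degree-preservation of $\pi$ gives the gauge-equivariance needed for the uniqueness theorem is a detail the paper leaves implicit, but it is the same argument.
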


\begin{proof}
Saturation means that $\{\sigma\in \Sigma:r(\sigma)=\pi(v)\}=\{\pi(\lambda):r(\lambda)=v\}$ for every $v\in \Lambda^0$, so the Cuntz-Krieger relation (CK4) in $\Sigma$ implies the analogous relation for the family $\{s^\Sigma_{\pi(\lambda)}:\lambda\in \Lambda\}$. Thus $\{s^\Sigma_{\pi(\lambda)}\}$ is a Cuntz-Krieger $\Lambda$-family, and there is a homomorphism $\pi_*$ satisfying $\pi_*(s^\Lambda_\lambda)=s^\Sigma_{\pi(\lambda)}$. Since $\pi$ is injective and every $s^\Sigma_{w}\not=0$, the gauge-invariant uniqueness theorem \cite[Theorem~3.4]{KP2} implies that $\pi_*$ is faithful.

To see that $\pi_*$ is extendible, write $\Lambda^0=\bigcup_n F_n$ as an increasing union of finite sets. Then $p_n:=\sum_{v\in F_n}s^{\Lambda}_v$ is an approximate identity for $C^*(\Lambda)$. The images $\pi(F_n)$ satisfy $\bigcup_n \pi(F_n)=\pi(\Lambda^0)$, and since $\pi$ is injective,
\[
\pi(p_n)=\sum_{v\in F_n}p^\Sigma_{\pi(v)}=\sum_{w\in \pi(F_n)}p^\Sigma_{w},
\]
which by Lemma~\ref{defsumpis} converge strictly to $p_{\pi(\Lambda^0)}$. Thus $\pi_*$ is extendible with $\overline{\pi_*}(1)=p_{\pi(\Lambda^0)}$. The functoriality follows from the formula $\pi_*(s^\Lambda_\lambda)=s^\Sigma_{\pi(\lambda)}$.
\end{proof}

\section{A Gross-Tucker theorem}\label{sec-GT}

Suppose that $\alpha$ is a left action of an Ore semigroup $S$ on a $k$-graph $\Sigma$, and that $\alpha$ is \emph{free} in the sense that $\alpha_t(\lambda)=\alpha_u(\lambda)$ implies $t=u$. (It suffices to check freeness on vertices.) We will show that if $\alpha$ admits a fundamental domain, then there is an isomorphism of $\Sigma$ onto a skew product which carries $\alpha$ into a canonical action of $S$ by left translation. Such results were first proved for actions of groups on directed graphs by Gross and Tucker (see, for example, \cite[Theorem~2.2.2]{GT}), and when $S$ is a group, Theorem~\ref{GTthm} below was proved by Kumjian and Pask \cite[Remark~5.6]{KP2}.

Even the first step, which is the construction of the quotient graph, relies on the Ore property. We define a relation $\sim$ on $\Sigma$ by
\[
\lambda\sim\mu\Longleftrightarrow \ \text{if there exist $t,u\in S$ such that $\alpha_t(\lambda)=\alpha_u(\mu)$.}
\]
The relation $\sim$ is trivially reflexive and symmetric. To see that it is transitive, suppose $\lambda\sim\mu$ and $\mu\sim\nu$, so that there exist $s,t,u,v\in S$ such that $\alpha_s(\lambda)=\alpha_t(\mu)$ and $\alpha_u(\mu)=\alpha_v(\nu)$. Since $S$ is Ore, there exist $x,y\in S$ such that $xt=yu$. Then
$\alpha_{xs}(\lambda)=\alpha_{xt}(\mu)=\alpha_{yu}(\mu)=\alpha_{yv}(\nu)$, which implies that $\lambda\sim\nu$. Thus $\sim$ is an equivalence relation on $\Sigma$. Since equivalent elements have the same degree, it makes sense to write $(S\backslash\Sigma)^0$ for the set of equivalence classes of vertices, $S\backslash\Sigma$ for the set of all equivalence classes, and to define $d:S\backslash \Sigma\to \N^k$ by $d([\lambda])=d(\lambda)$. It is easy to check that there are well-defined maps $r,s:S\backslash\Sigma\to (S\backslash\Sigma)^0$ such that $r([\lambda])=[r(\lambda)]$ and $s([\lambda])=[s(\lambda)]$.

\begin{lemma} \label{quotient}
With notation as above $((S\backslash \Sigma)^0,S\backslash \Sigma,r,s,d)$ is a $k$-graph, with composition defined by
\begin{equation}\label{defcomp}
[\lambda][\mu]=[\alpha_t(\lambda)\alpha_u(\mu)]\ \text{where $t,u\in S$ satisfy $\alpha_t(s(\lambda))=\alpha_u(r(\mu))$,}
\end{equation}
and $q:\lambda\mapsto [\lambda]$ is a $k$-graph morphism.
\end{lemma}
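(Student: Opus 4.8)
The plan is to verify that \eqref{defcomp} defines an associative composition on $S\backslash\Sigma$ for which the vertex classes are the identity morphisms, that $r,s,d$ are compatible with it, and that the factorisation property holds; granting this, the assertions about $q$ are immediate from the formula $[\lambda]=[\alpha_1(\lambda)]$. Throughout I would use that each $\alpha_t$ is a $k$-graph morphism, so that it intertwines $r$ and $s$, preserves $d$, and is multiplicative, and that $t\mapsto\alpha_t$ is a semigroup homomorphism; I would also use repeatedly that $\lambda\sim\mu$ exactly when $\alpha_a(\lambda)=\alpha_b(\mu)$ for some $a,b\in S$.

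The bulk of the work is showing that \eqref{defcomp} is well defined. First, composability of $[\lambda]$ and $[\mu]$ means $s([\lambda])=r([\mu])$, i.e.\ $s(\lambda)\sim r(\mu)$, which is precisely the existence of $t,u$ with $\alpha_t(s(\lambda))=\alpha_u(r(\mu))$; since $\alpha_t,\alpha_u$ intertwine $r,s$, this says $s(\alpha_t(\lambda))=r(\alpha_u(\mu))$, so the product $\alpha_t(\lambda)\alpha_u(\mu)$ makes sense in $\Sigma$. Next I would show the class of this product is independent of $t,u$: given a second pair $t',u'$, the Ore property supplies $x,y$ with $xt=yt'$, and applying $\alpha_x,\alpha_y$ to the two defining equations gives $\alpha_{xu}(r(\mu))=\alpha_{yu'}(r(\mu))$; here \emph{freeness} is essential, as it forces $xu=yu'$. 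Then, since $\alpha_x,\alpha_y$ are multiplicative,
\[
\alpha_x\big(\alpha_t(\lambda)\alpha_u(\mu)\big)=\alpha_{xt}(\lambda)\alpha_{xu}(\mu)=\alpha_{yt'}(\lambda)\alpha_{yu'}(\mu)=\alpha_y\big(\alpha_{t'}(\lambda)\alpha_{u'}(\mu)\big),
\]
so the two products are equivalent. Independence of the representatives $\lambda,\mu$ I would treat one factor at a time: if $\alpha_a(\lambda)=\alpha_b(\lambda')$, then choosing $x,y$ with $xa=yt$ produces a valid pair $(xb,yu)$ for $(\lambda',\mu)$ with $\alpha_{xb}(\lambda')=\alpha_{xa}(\lambda)=\alpha_{yt}(\lambda)$, whence $\alpha_{xb}(\lambda')\alpha_{yu}(\mu)=\alpha_y(\alpha_t(\lambda)\alpha_u(\mu))$ and the classes agree; the case $\mu\sim\mu'$ is symmetric.

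With composition in hand, the category axioms are routine: the vertex class $[v]$ acts as an identity because $\alpha_t(v)$ is a vertex equal to $r(\alpha_u(\lambda))$, so $\alpha_t(v)\alpha_u(\lambda)=\alpha_u(\lambda)$; we have $d([\lambda][\mu])=d(\alpha_t(\lambda)\alpha_u(\mu))=d(\lambda)+d(\mu)$ since $\alpha$ preserves degree; and associativity follows from associativity in $\Sigma$ after using the Ore property to bring all three factors into a common translate. For the factorisation property, given $d([\lambda])=m+n$ I would factor $\lambda=\mu\nu$ in $\Sigma$ and note $[\lambda]=[\mu][\nu]$ (take $t=u=1$), giving existence. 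For uniqueness, any factorisation $[\lambda]=[\mu'][\nu']$ lifts to an equation $\alpha_a(\lambda)=\alpha_{bt}(\mu')\alpha_{bu}(\nu')$ in $\Sigma$; since $\alpha$ preserves degree, the two sides have matching degrees in each factor, so the factorisation property in $\Sigma$ forces $\alpha_a(\mu)=\alpha_{bt}(\mu')$ and $\alpha_a(\nu)=\alpha_{bu}(\nu')$, i.e.\ $[\mu]=[\mu']$ and $[\nu]=[\nu']$.

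The main obstacle is well-definedness of the composition, and specifically the interplay between the two hypotheses: the Ore property is what lets one align two unrelated pairs of semigroup elements by passing to common multiples, while freeness is exactly the cancellation needed to upgrade an equality of translated \emph{vertices} to an equality of the translating semigroup elements. Associativity is conceptually the same but requires more careful bookkeeping of the auxiliary elements, and I would expect it to be the most calculation-heavy step.
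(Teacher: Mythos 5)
Your proposal is correct and takes essentially the same approach as the paper: the paper's proof is exactly the checklist you work through (well-definedness of \eqref{defcomp} in $t,u$ via the Ore property plus freeness, independence of representatives via Ore, identities, associativity, the factorisation property, and $q(\lambda\mu)=q(\lambda)q(\mu)$ by taking $t=u=1$), with the paper leaving the individual verifications to the reader while you supply them. Your attribution of where the Ore property and where freeness are needed matches the paper's annotations precisely, and your detailed arguments (in particular the freeness step upgrading $\alpha_{xu}(r(\mu))=\alpha_{yu'}(r(\mu))$ to $xu=yu'$, and the lift of a factorisation of $[\lambda]$ to $\alpha_a(\lambda)=\alpha_{bt}(\mu')\alpha_{bu}(\nu')$) are sound.
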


\begin{proof}
To verify that $S\backslash \Sigma$ is a $k$-graph, we have to check that:
\begin{itemize}
\item the right-hand side of \eqref{defcomp} is independent of the choice of $t$ and $u$ (this uses the Ore property and the freeness of the action);
\item the right-hand side of \eqref{defcomp} is independent of the choice of coset representatives: (this uses the Ore property);
\item $r([\lambda][\mu])=r([\lambda])$ and $s([\lambda][\mu])=s([\mu])$;
\item associativity (this uses the Ore property);
\item the classes $[\iota_v]$ have the properties required of the identity morphisms at $[v]$;
\item $S\backslash \Sigma$ has the factorisation property.
\end{itemize}
Finally, if $\lambda$ and $\mu$ are composable, we can take $t=u=1$ in \eqref{defcomp}, and deduce that $q(\lambda\mu)=q(\lambda)q(\mu)$.
\end{proof}

Now suppose that $\Lambda$ is a $k$-graph and $\eta:\Lambda\to S$ is a functor into a semigroup $S$ (viewed as a category with one object). As in \cite[Definition~5.1]{KP2}, we can make the set-theoretic product $\Lambda\times S$ into a $k$-graph $\Lambda\times_\eta S$ by taking $(\Lambda\times_\eta S)^0=\Lambda^0\times S$, defining $r,s:\Lambda\times_\eta S\to (\Lambda\times_\eta S)^0$ by
\[
r(\lambda,t)=(r(\lambda),t)\ \text{ and }\ s(\lambda,t)=(s(\lambda),t\eta(\lambda)),
\]
defining the composition by
\[
(\lambda,t)(\mu, u)=(\lambda\mu,t)\ \text{ when $s(\lambda,t)=r(\mu, u)$ (which is equivalent to $u=t\eta(\lambda)$\,),}
\]
and defining $d:\Lambda\times_\eta S\to \N^k$ by $d(\lambda,t)=d(\lambda)$. Of course, one has to check the axioms to see that this does define a $k$-graph, but this is routine. We call $\Lambda \times_\eta S$ a \emph{skew product}. Every $\Lambda\times_\eta S$ carries a natural action $\lt$ of $S$ defined by $\lt_u(\lambda,t)=(\lambda,ut)$, and this action is free because $S$ is cancellative.  

The Gross-Tucker theorem implicit in \cite[Remark~5.6]{KP2} says that every free action of a group $\Gamma$ on a $k$-graph $\Sigma$ is isomorphic to the action $\lt$ on a skew-product $(\Gamma\backslash\Sigma)\times_\eta\Gamma$. As in \cite{PRY}, to get a Gross-Tucker theorem for actions of an Ore semigroup $S$, one has to insist that the action admits a \emph{fundamental domain}, which is a subset $F$ of $\Sigma$ such that for every $\sigma\in\Sigma$ there are exactly one $\mu\in F$ and one $t\in S$ such that $\alpha_t(\mu)=\sigma$, and such that $r(\mu)\in F$ for every $\mu\in F$.  

For a skew product $\Lambda\times_\eta S$, $F=\{(\lambda,1_S):\lambda\in \Lambda\}$ is a fundamental domain. The following ``Gross-Tucker Theorem'' says that existence of a fundamental domain characterises the actions $\lt$ on skew products. 

\begin{thm}\label{GTthm}
Suppose that $\Sigma$ is a row-finite $k$-graph with no sources, and $\alpha$ is a free action of an Ore semigroup $S$ on $\Sigma$ which admits a fundamental domain $F$. Let $q:\Sigma\to S\backslash \Sigma$ be the quotient map, and define $c:S\backslash \Sigma\to F$, $\eta:S\backslash \Sigma\to S$  and $\xi:\Sigma\to S$ by
\begin{equation}\label{defcetc*}
q(c(\lambda))=\lambda,\ \ s(c(\lambda))=\alpha_{\eta(\lambda)}(c(s(\lambda)))\ \text{ and }\ \sigma=\alpha_{\xi(\sigma)}(c(q(\sigma))).
\end{equation}
Then $\eta:S\backslash \Sigma\to S$ is a functor, and the map $\phi(\sigma):=(q(\sigma),\xi(\sigma))$ is an isomorphism of $\Sigma$ onto the skew product $(S\backslash \Sigma)\times_\eta S$, with inverse given by $\phi^{-1}(\lambda,t)=\alpha_t(c(\lambda))$. The isomorphism $\phi$ satisfies $\phi\circ\alpha_t=\lt_t\circ\phi$.
\end{thm}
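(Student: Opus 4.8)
The plan is to treat $\xi$ and $\eta$ as cocycle data and to derive the entire statement from a short list of identities for $\xi$. First I would check that $c$, $\xi$ and $\eta$ are genuinely well-defined by the implicit formulas in \eqref{defcetc*}. The key preliminary point is that each $\sim$-class contains \emph{exactly one} element of $F$: existence is immediate from the fundamental-domain property, and uniqueness follows because if $\mu,\mu'\in F$ lie in the same class then $\alpha_s(\mu)=\alpha_{s'}(\mu')$ for some $s,s'\in S$, and applying the uniqueness clause of the fundamental-domain property to this common element forces $\mu=\mu'$ (and $s=s'$). Thus $c(\lambda)$ is the unique element of $F\cap q^{-1}(\lambda)$, for each $\sigma$ there is a unique $t$ with $\alpha_t(c(q(\sigma)))=\sigma$ giving $\xi(\sigma)$, and $\eta(\lambda)$ is likewise determined.

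Next I would establish four identities that drive everything: (a) equivariance $\xi(\alpha_u(\sigma))=u\,\xi(\sigma)$, obtained by applying $\alpha_u$ to $\sigma=\alpha_{\xi(\sigma)}(c(q(\sigma)))$ and invoking uniqueness; (b) $\xi(\mu)=1$ for $\mu\in F$, by freeness since $c(q(\mu))=\mu$; (c) $\xi(r(\sigma))=\xi(\sigma)$, obtained by applying the morphism $\alpha_{\xi(\sigma)}$ together with $r$ to the defining equation and using that $r(c(q(\sigma)))\in F$ (this is where the clause $r(F)\subseteq F$ is essential) so that $c(q(r(\sigma)))=r(c(q(\sigma)))$; and (d) $\xi(s(\sigma))=\xi(\sigma)\,\eta(q(\sigma))$, obtained by applying $s$ to the same equation and unwinding the definition of $\eta$. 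From (a) and (b) one reads off $\eta(\lambda)=\xi(s(c(\lambda)))$.

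With these in hand the functoriality of $\eta$ is short. For composable classes $\lambda,\mu$ I would use Lemma~\ref{quotient} together with $s(c(\lambda))=\alpha_{\eta(\lambda)}(c(s(\lambda)))=\alpha_{\eta(\lambda)}(r(c(\mu)))$ to write $\lambda\mu=q(\sigma)$ for $\sigma:=c(\lambda)\,\alpha_{\eta(\lambda)}(c(\mu))$. Since $r(\sigma)=r(c(\lambda))\in F$, identities (b) and (c) give $\xi(\sigma)=1$, so (d) yields $\eta(\lambda\mu)=\xi(s(\sigma))$; then $s(\sigma)=\alpha_{\eta(\lambda)}(s(c(\mu)))$ and (a) give $\xi(s(\sigma))=\eta(\lambda)\,\xi(s(c(\mu)))=\eta(\lambda)\eta(\mu)$. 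The object identity $\eta([v])=1$ is immediate from freeness. This legitimises the formation of $(S\backslash\Sigma)\times_\eta S$.

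Finally I would verify that $\phi(\sigma)=(q(\sigma),\xi(\sigma))$ is a $k$-graph isomorphism with the stated inverse. Preservation of $d$ is built into the definitions; preservation of $r$ and $s$ is exactly (c) and (d) read against the skew-product formulas for $r$ and $s$; for composable $\sigma,\tau$, identity (d) at $s(\sigma)=r(\tau)$ shows $\phi(\sigma)$ and $\phi(\tau)$ are composable, and then $\phi(\sigma\tau)=\phi(\sigma)\phi(\tau)$ reduces to $\xi(\sigma\tau)=\xi(\sigma)$, which is (c) since $r(\sigma\tau)=r(\sigma)$. That $\psi(\lambda,t):=\alpha_t(c(\lambda))$ inverts $\phi$ follows from (a), (b) and the defining equation of $\xi$: one gets $\phi(\psi(\lambda,t))=(\lambda,t)$ from $q(\alpha_t(c(\lambda)))=\lambda$ and $\xi(\alpha_t(c(\lambda)))=t$, and $\psi(\phi(\sigma))=\sigma$ is the definition of $\xi$. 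The equivariance $\phi\circ\alpha_t=\lt_t\circ\phi$ is then immediate from (a). I expect the only genuine obstacles to be the well-definedness of $c,\xi,\eta$ and the proofs of (c) and (d): these are precisely where the two clauses of a fundamental domain (unique representatives, and $r$-invariance) must be used, and once (a)--(d) are secured the remaining verifications are formal.
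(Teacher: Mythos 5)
Your proposal is correct, and every step checks out, but it takes a genuinely different route from the paper. The paper proves Theorem~\ref{GTthm} essentially by citation: it observes that when $S$ is a group the result is \cite[Remark~5.6]{KP2}, that the proof there begins by constructing a fundamental domain (which here is supplied by hypothesis instead), and that the rest of Kumjian and Pask's argument carries over to show that the stated formula $\phi^{-1}(\lambda,t)=\alpha_t(c(\lambda))$ defines an isomorphism of $(S\backslash\Sigma)\times_\eta S$ onto $\Sigma$ --- that is, the paper works with the inverse map and outsources all verifications to the group case. You instead give a self-contained argument working directly with $\phi$, organized around the four identities $\xi(\alpha_u(\sigma))=u\,\xi(\sigma)$, $\xi(\mu)=1$ for $\mu\in F$, $\xi(r(\sigma))=\xi(\sigma)$ and $\xi(s(\sigma))=\xi(\sigma)\,\eta(q(\sigma))$. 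I verified the details: the exactness clause of the fundamental-domain definition does give a unique $F$-representative in each $\sim$-class (so $c$, $\xi$, $\eta$ are well defined, with $\eta(\lambda)=\xi(s(c(\lambda)))$ following from (a) and (b) as you say); the clause $r(F)\subseteq F$ yields $c(q(r(\sigma)))=r(c(q(\sigma)))$ and hence (c); your functoriality computation with the auxiliary path $\sigma:=c(\lambda)\,\alpha_{\eta(\lambda)}(c(\mu))$ is sound, since composability follows from $c(r(\mu))=r(c(\mu))$; and once (a)--(d) are in hand, the morphism properties of $\phi$, the two-sided inverse $\psi(\lambda,t)=\alpha_t(c(\lambda))$, and the equivariance $\phi\circ\alpha_t=\lt_t\circ\phi$ are indeed formal (note that $\psi$ being a $k$-graph morphism is automatic, as the inverse of a bijective degree-preserving functor). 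What each approach buys: the paper's proof is two sentences long but leaves the reader to adapt \cite{KP2}; yours is longer but makes explicit precisely where each of the two clauses in the definition of a fundamental domain is used --- uniqueness of representatives for well-definedness and for (a), (b); $r$-invariance for (c), and hence for compatibility of $\phi$ with range maps and for the functoriality of $\eta$ --- information the citation conceals. Your cocycle-style organization would make a cleaner written-out proof than the one the paper gestures at.
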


When $S$ is a group, every free action of $S$ admits a fundamental domain, and we recover the result of \cite[Remark~5.6]{KP2}. Indeed, that proof starts by constructing a suitable fundamental domain. The rest of the proof of \cite[Remark~5.6]{KP2} then carries over to our situation, and shows that the formula we give for $\phi^{-1}$ defines an isomorphism of $(S\backslash \Sigma)\times_\eta S$ onto $\Sigma$.

\begin{example} \label{fdrem}
There are free semigroup actions which do not admit a fundamental domain. For example, consider  the $k$-graph $\Delta_k$ of \cite[\S 3]{KP3}, which has vertex set $\Delta_k^0 = \Z^k$, morphisms $\{(m,n) \in \mathbb{Z}^k \times \mathbb{Z}^k : m \le n \}$, $r (m,n) = m$,  $s (m,n) = n$, composition given by $(m,n)(n,p)=(m,p)$, and degree map $d:(m,n)\mapsto n-m$. There is a free action $\alpha$ of $\mathbb{N}^k$ on $\Delta_k$ such that $\alpha_p ( m , n ) = (m+p , n+p)$, and we claim that this action cannot have a fundamental domain. To see this, note that a fundamental domain $F$ would have to contain, for every $m\in \Delta_k^0$, a vertex $n\leq m$ (so that $m=\alpha_{m-n}(n)$ for some $n\in F$). Thus it would have to contain infinitely many vertices. But if $F$ has just two distinct vertices $n,p$, then every $m\geq n\vee p$ can be written as $m=\alpha_{m-n}(n)=\alpha_{m-p}(p)$. So there is no fundamental domain. 
\end{example}

\section{Dilating semigroup actions}\label{sec-dilate}

\begin{thm}[Laca]\label{laca}
Suppose that $S$ is an Ore semigroup with enveloping group $\Gamma=S^{-1}S$, and $\alpha:S\to \End A$ is an action of $S$ by injective extendible endomorphisms of a $C^*$-algebra $A$.

\smallskip
\textnormal{(a)} There are an action $\beta$ of $\Gamma$ on a $C^*$-algebra $B$ and an injective extendible homomorphism $j:A\to B$ such that
\begin{enumerate}
\item[\textnormal{(L1)}] $j\circ \alpha_u=\beta_u\circ j$ for $u\in S$, and

\smallskip
\item[\textnormal{(L2)}] $\bigcup_{u\in S}\beta_u^{-1}(j(A))$ is dense in $B$;
\end{enumerate}
the triple $(B,\beta, j)$ with these properties is unique up to isomorphism.

\smallskip
\textnormal{(b)} Suppose  $(B,\beta, j)$ has properties \textnormal{(L1)} and \textnormal{(L2)}, write $p:=\overline{i_B\circ j}(1)$, and define $v_s:=i_\Gamma(s)p$. Then $(i_B\circ j,v)$ is a covariant representation of $(A,S,\alpha)$, and $(i_B\circ j)\times v$ is an isomorphism of $A\times_\alpha S$ onto $p(B\rtimes_\beta \Gamma)p$.
\end{thm}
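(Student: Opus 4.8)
The plan is to build the dilation $(B,\beta,j)$ by a direct-limit construction and then to identify $A\times_\alpha S$ with the corner by exhibiting an explicit covariant representation and checking that it has the right universal property; the substance of this is due to Laca \cite{Laca}, and I would reproduce the shape of the argument in our notation. For part~(a), the Ore property makes $S$ into a directed set under $s\leq t\Leftrightarrow t\in Ss$, and since the $\alpha_t$ are injective the bonding maps of the system $(A,\alpha)$ indexed by $(S,\leq)$ are injective embeddings. I would take $B:=\varinjlim(A,\alpha)$, let $j:A\to B$ be the canonical injection of the copy of $A$ at $1\in S$, and define $\beta:\Gamma\to\operatorname{Aut}B$ so that each $\beta_g$, $g=s^{-1}t$, is the automorphism induced on the limit by the natural shifts of the direct system; that this is a well-defined homomorphism of $\Gamma$ into automorphisms follows from the Ore relations, and then (L1), (L2), and the injectivity and extendibility of $j$ are immediate from the construction. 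For uniqueness, given a second triple $(B',\beta',j')$ satisfying (L1) and (L2), I would define a map on the dense subalgebra $\bigcup_u\beta_u^{-1}(j(A))$ by $\beta_u^{-1}(j(a))\mapsto(\beta'_u)^{-1}(j'(a))$; property (L1) shows it is well defined and isometric on each $\beta_u^{-1}(j(A))$, and (L2) lets it extend to the required isomorphism.

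For part~(b), I would first note that $p=\overline{i_B\circ j}(1)=\overline{i_B}(q)$, where $q:=\overline{j}(1)\in M(B)$, so $p$ is a projection and $\pi:=i_B\circ j$ satisfies $p\pi(a)=\pi(a)=\pi(a)p$; hence $\pi$ maps into $p(B\rtimes_\beta\Gamma)p$. A short computation with the covariance relation $i_\Gamma(g)i_B(b)i_\Gamma(g)^*=i_B(\beta_g(b))$ in $M(B\rtimes_\beta\Gamma)$ then shows that each $v_s=i_\Gamma(s)p$ is an isometry of the corner, that $v_sv_t=v_{st}$ (using $\beta_t(q)\leq q$, which holds because $\beta_t(j(A))=j(\alpha_t(A))\subseteq j(A)$ for $t\in S$), and that $v_t\pi(a)v_t^*=i_B(\beta_t(j(a)))=\pi(\alpha_t(a))$ by (L1). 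Thus $(\pi,v)$ is a covariant representation of $(A,S,\alpha)$, and I obtain $\Phi:=\pi\times v\colon A\times_\alpha S\to p(B\rtimes_\beta\Gamma)p$.

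For surjectivity I would compute $v_s^*\pi(a)v_t=p\,i_B(\beta_{s^{-1}}(j(a)))\,i_\Gamma(s^{-1}t)\,p$ and then realise an arbitrary generator $p\,i_B(b)\,i_\Gamma(g)\,p$ of the corner: writing $g=s_0^{-1}t_0$ with $s_0,t_0\in S$ and choosing $s\in Ss_0$ large enough that $b$ is approximated by some $\beta_{s^{-1}}(j(a))$—possible because (L2) together with $\beta_t(j(A))\subseteq j(A)$ makes $\bigcup_s\beta_{s^{-1}}(j(A))$ a dense increasing union, and $Ss_0$ is cofinal by the Ore property—one gets $t:=sg\in S$ and $v_s^*\pi(a)v_t\approx p\,i_B(b)\,i_\Gamma(g)\,p$. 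Since the range of a $C^*$-homomorphism is closed, $\Phi$ is onto the corner.

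The main obstacle is injectivity, where I would show that $p(B\rtimes_\beta\Gamma)p$ has the universal property of $A\times_\alpha S$: given an arbitrary covariant representation $(\pi',V')$ of $(A,S,\alpha)$, I would dilate it to a covariant representation $(\rho,W)$ of $(B,\Gamma,\beta)$—unitarising the isometries $V'$ over the direct-limit structure of $B$—and check that compressing $\rho\times W$ by $p$ recovers $\pi'\times V'$. This dilation of representations is the technical heart, and once it is in place it forces $\Phi$ to be faithful and completes the identification $A\times_\alpha S\cong p(B\rtimes_\beta\Gamma)p$.
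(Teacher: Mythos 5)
Your proposal is correct and takes essentially the same route as the paper, which simply defers to Laca: part (a) via the direct limit of $(A,\alpha)$ over $S$ ordered by the Ore relation (the paper cites \cite[Theorem~2.1]{Laca}, together with \cite[Lemma~4.3]{PRY} for the extendibility of $j=\alpha^1$ --- the one spot where your ``immediate from the construction'' conceals a short argument that genuinely needs extendibility of the $\alpha_t$), uniqueness verbatim as you describe, and part (b) by the corner computations you give. The dilation of covariant representations that you rightly flag as the technical heart of injectivity is exactly \cite[Lemma~2.3]{Laca}, which the paper likewise leaves to Laca, observing only that it is purely representation-theoretic and so carries over to the non-unital setting.
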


For the unital case, part (a) is Theorem~2.1 of \cite{Laca}. Laca proves the existence of $(B,\Gamma,\beta)$ using a direct-limit construction, and $j$ is the canonical embedding $\alpha^1$ of the first copy $A_1$ of $A$ in the direct limit $A_\infty$. Lemma~4.3 of \cite{PRY} says that if the endomorphisms are all extendible, then so is $j:=\alpha^1$. Laca's proof of uniqueness carries over verbatim. Part (b) is proved for the unital case in \cite[Theorem~2.4]{Laca}, and again the proof carries over: the crucial step, which is Lemma~2.3 of \cite{Laca}, is purely representation-theoretic.

In the context of graph algebras, Laca's theorem takes the following form.

\begin{cor}\label{corlaca}
Suppose that $S$ is an Ore semigroup with enveloping group $\Gamma=S^{-1}S$, and $\beta$ is a free action of $\Gamma$ on a row-finite $k$-graph $\Lambda$. Suppose that $\Omega$ is a saturated subgraph of $\Lambda$ such that $\beta_u(\Omega)\subset \Omega$ for all $u\in S$ and $\bigcup_{u\in S}\beta_u^{-1}(\Omega)=\Lambda$. Write $\alpha_u:=\beta_u|_\Omega$,  and set $p:=\overline{i_{C^*(\Lambda)}}(p_{\Omega^0})$. Then there is an isomorphism $\psi$ of $C^*(\Omega)\times_{\alpha_*}S$ onto $p(C^*(\Lambda)\rtimes_{\beta_*}\Gamma)p$ such that
\[
\psi(i_{C^*(\Omega)}(s^\Omega_\omega))=i_{C^*(\Lambda)}(s^\Lambda_\omega)\ \text{ and }\ \overline{\psi}(i_S(u))=i_\Gamma(u)p.
\]
\end{cor}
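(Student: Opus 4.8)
The plan is to realise the corollary as a direct application of Laca's theorem (Theorem~\ref{laca}) to the data $A=C^*(\Omega)$ and $B=C^*(\Lambda)$. Write $\iota:\Omega\hookrightarrow\Lambda$ for the inclusion. Since $\Omega$ is saturated, $\iota$ is an injective saturated $k$-graph morphism, so Lemma~\ref{extendible} supplies an injective extendible homomorphism $j:=\iota_*:C^*(\Omega)\to C^*(\Lambda)$ with $j(s^\Omega_\omega)=s^\Lambda_\omega$ and $\overline{j}(1)=p_{\Omega^0}$. (Saturation also forces $v\Omega^n=v\Lambda^n$ for $v\in\Omega^0$, so $\Omega$ inherits row-finiteness and the no-sources condition and $C^*(\Omega)$ makes sense.) Each $\beta_u$ with $u\in\Gamma$ is an automorphism of $\Lambda$, so functoriality of $\tau\mapsto\tau_*$ makes $\beta_*$ an action of $\Gamma$ by automorphisms of $C^*(\Lambda)$; this will be the dilated action $\beta$ of Theorem~\ref{laca}.

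Next I would check that $\alpha_*$ is an action of $S$ by injective extendible endomorphisms, as the theorem requires. The one subtle point is that each $\alpha_u=\beta_u|_\Omega$ really is a saturated $k$-graph morphism \emph{from $\Omega$ to itself}: if $\sigma\in\Omega$ has $r(\sigma)\in\beta_u(\Omega^0)$, then $\tau:=\beta_u^{-1}(\sigma)$ satisfies $r(\tau)\in\Omega^0$, and saturation of $\Omega$ gives $\tau\in\Omega$, whence $\sigma=\beta_u(\tau)\in\beta_u(\Omega)$. Granting this, Lemma~\ref{extendible} makes each $\alpha_{u*}$ an injective extendible endomorphism of $C^*(\Omega)$, and functoriality makes $u\mapsto\alpha_{u*}$ a homomorphism $S\to\End C^*(\Omega)$. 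Property~(L1), namely $j\circ\alpha_{u*}=\beta_{u*}\circ j$, is then immediate on the generators $s^\Omega_\omega$, since both $j(\alpha_{u*}(s^\Omega_\omega))$ and $\beta_{u*}(j(s^\Omega_\omega))$ equal $s^\Lambda_{\beta_u(\omega)}$.

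The main work is property~(L2), and this is where the Ore condition enters; it is the step I expect to be the crux. Because $C^*(\Lambda)=\clsp\{s^\Lambda_\lambda (s^\Lambda_\mu)^*\}$ and $\beta_{u*}(s^\Lambda_\lambda (s^\Lambda_\mu)^*)=s^\Lambda_{\beta_u(\lambda)}(s^\Lambda_{\beta_u(\mu)})^*$, it suffices to find, for each pair $\lambda,\mu$, a single $u\in S$ with $\beta_u(\lambda),\beta_u(\mu)\in\Omega$, for then that spanning element lies in $\beta_{u*}^{-1}(j(C^*(\Omega)))$. The hypothesis $\bigcup_{u\in S}\beta_u^{-1}(\Omega)=\Lambda$ produces $u_1,u_2\in S$ with $\beta_{u_1}(\lambda)\in\Omega$ and $\beta_{u_2}(\mu)\in\Omega$ separately; choosing $x,y\in S$ with $xu_1=yu_2=:u$ by the Ore property and using $\beta_x(\Omega)\subset\Omega$, $\beta_y(\Omega)\subset\Omega$ upgrades this to the single exponent $u$. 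The same device (if $a\in\beta_{u*}^{-1}(j(C^*(\Omega)))$ then $a\in\beta_{(xu)*}^{-1}(j(C^*(\Omega)))$, since $\beta_{x*}$ carries $j(C^*(\Omega))$ into itself) shows that $\bigcup_{u\in S}\beta_{u*}^{-1}(j(C^*(\Omega)))$ is directed; it is therefore a $*$-subalgebra whose closure contains every $s^\Lambda_\lambda (s^\Lambda_\mu)^*$, hence is dense.

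Finally I would invoke Theorem~\ref{laca}(b) for the triple $(C^*(\Lambda),\beta_*,j)$. Its projection $\overline{i_{C^*(\Lambda)}\circ j}(1)$ equals $\overline{i_{C^*(\Lambda)}}(\overline{j}(1))=\overline{i_{C^*(\Lambda)}}(p_{\Omega^0})=p$, by strict continuity of $\overline{i_{C^*(\Lambda)}}$ applied to an approximate identity, so the two descriptions of $p$ agree. Setting $\psi:=(i_{C^*(\Lambda)}\circ j)\times v$, the theorem gives that $\psi$ is an isomorphism of $C^*(\Omega)\times_{\alpha_*}S$ onto $p(C^*(\Lambda)\rtimes_{\beta_*}\Gamma)p$, and the two displayed formulas fall out of the universal property of the crossed product: $\psi\circ i_{C^*(\Omega)}=i_{C^*(\Lambda)}\circ j$ yields $\psi(i_{C^*(\Omega)}(s^\Omega_\omega))=i_{C^*(\Lambda)}(s^\Lambda_\omega)$, while $\overline{\psi}\circ i_S=v$ yields $\overline{\psi}(i_S(u))=v_u=i_\Gamma(u)p$.
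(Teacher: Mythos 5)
Your proof is correct and takes essentially the same route as the paper's: Lemma~\ref{extendible} supplies $j=\iota_*$ and the action $\alpha_*$ (via the identical saturation argument for each $\alpha_u$), (L1) is checked on generators, (L2) is verified through the Ore property, and Theorem~\ref{laca}(b) then delivers $\psi$ together with both displayed formulas. The only cosmetic difference is in (L2): you place each spanning element $s^\Lambda_\lambda(s^\Lambda_\mu)^*$ in the union $\bigcup_{u\in S}(\beta_*)_u^{-1}(j(C^*(\Omega)))$ directly by finding a common exponent and then note the union is directed, whereas the paper first observes the union contains all generators $s^\Lambda_\lambda$ and then uses the same Ore-based inclusion $(\beta_*)_u^{-1}(j(C^*(\Omega)))\subset(\beta_*)_x^{-1}(j(C^*(\Omega)))$ to show it is closed under multiplication --- the same mechanism either way.
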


\begin{proof}
Let $\pi:\Omega\to\Lambda$ be the inclusion. Since $\Omega$ is saturated in $\Lambda$, Lemma~\ref{extendible} implies that $\pi$ induces an injective extendible homomorphism $\pi_*:C^*(\Omega)\to p_{\Omega^0}C^*(\Lambda)p_{\Omega^0}$ such that $\pi_*(s^\Omega_\omega)=s^\Lambda_\omega$ for $\omega\in\Omega$ and $\overline{\pi_*}(1)=p_{\Omega^0}$. Since each $\beta_u$ is an automorphism, it is saturated, and we claim that the restriction $\alpha_u$ is saturated as a graph morphism from $\Omega$ to $\Omega$. Indeed, if $\omega\in\Omega$ has $r(\omega)\in\alpha_u(\Omega^0)$, say $r(\omega)=\alpha_u(v)$, then
\[
r(\beta_u^{-1}(\omega))=\beta_u^{-1}(r(\omega))=\beta_u^{-1}(\alpha_u(v))=\beta_u^{-1}(\beta_u(v))=v
\]
belongs to $\Omega^0$, $\beta_u^{-1}(\omega)$ belongs to $\Omega$ because $\Omega$ is saturated in $\Lambda$, and $\omega=\alpha_u(\beta_u^{-1}(\omega))$ belongs to $\alpha_u(\Omega)$. Now Lemma~\ref{extendible} implies that $\alpha$ induces an action $\alpha_*$ of $S$ on $C^*(\Omega)$ by injective extendible endomorphisms.

We will show that the system $(C^*(\Lambda),\Gamma,\beta_*)$ and $j:=\pi_*$ have the properties (L1) and (L2) of Theorem~\ref{laca} relative to the semigroup dynamical system $(C^*(\Omega),S,\alpha_*)$. Homomorphisms are determined by what they do on generators, so for $\omega\in\Omega$ and $u\in S$, the calculation
\[
\pi_*((\alpha_*)_u(s^\Omega_\omega))= \pi_*(s^\Omega_{\alpha_u(\omega)})=s^\Lambda_{\alpha_u(\omega)}=s^\Lambda_{\beta_u(\omega)}=(\beta_*)_u(s^\Lambda_{\omega})=(\beta_*)_u(\pi_*(s^\Omega_{\omega}))
\]
implies that $\pi_*\circ(\alpha_*)_u=(\beta_*)_u\circ \pi_*$, which is (L1). Next, note that for $u\in S$ we have
\[
(\beta_*)_u^{-1}(\pi_*(C^*(\Omega)))\supset\big\{(\beta_*)_u^{-1}(s^\Lambda_\omega):\omega\in \Omega\big\}=\big\{s^\Lambda_{\beta_u^{-1}(\omega)}:\omega\in\Omega\big\},
\]
which by the hypothesis $\bigcup_{u\in S}\beta_u^{-1}(\Omega)=\Lambda$ implies that $A_0:=\bigcup_{u\in S}(\beta_*)_u^{-1}(\pi_*(C^*(\Omega)))$ contains all the generators of $C^*(\Lambda)$.   Thus to check (L2), it is enough to prove that $A_0$ is a $*$-algebra, and the only non-obvious point is whether $A_0$ is closed under multiplication. Let $a\in (\beta_*)_u^{-1}(\pi_*(C^*(\Omega)))$ and $b\in (\beta_*)_t^{-1}(\pi_*(C^*(\Omega)))$ for $u,t\in S$. Since $S$ is Ore, there exist $r,w\in S$ such that $ru=wt=x$, say. Since $(\beta_*)_r\circ\pi_*=\pi_*\circ(\alpha_*)_r$, we have $\range (\beta_*)_r\circ\pi_*\subset \range\pi_*$, and
\[
(\beta_*)_{u}^{-1}(\pi_*(C^*(\Omega)))=(\beta_*)_{ru}^{-1}\circ(\beta_*)_{r}(\pi_*(C^*(\Omega)))\subset(\beta_*)_{x}^{-1}(\pi_*(C^*(\Omega))).
\]
Similarly,
\[
(\beta_*)_{t}^{-1}(\pi_*(C^*(\Omega)))\subset(\beta_*)_{x}^{-1}(\pi_*(C^*(\Omega))).
\]
Since $(\beta_*)_{x}^{-1}(\pi_*(C^*(\Omega)))$ is an algebra, we have $ab\in (\beta_*)_{x}^{-1}(\pi_*(C^*(\Omega)))\subset A_0$, as required.

We can now set $v_s:=i_\Gamma(s)\overline{i_{C^*(\Lambda)}\circ\pi_*}(1)=i_\Gamma(s)p$, and deduce from Theorem~\ref{laca} that $\psi:=(i_{C^*(\Lambda)}\circ \pi_*)\times v$ is an isomorphism of $C^*(\Omega)\times_{\alpha_*}S$ onto $p(C^*(\Lambda)\rtimes_{\beta_*}\Gamma)p$. This isomorphism has the required properties.
\end{proof}

\section{Crossed products of the $C^*$-algebras of skew-product graphs}\label{sec-group}

The action $\lt$ of a group $\Gamma$ on a skew-product $\Lambda\times_\eta\Gamma$ induces an action $\lt_*$ of $\Gamma$ on the graph algebra $C^*(\Lambda\times_\eta \Gamma)$. Kumjian and Pask proved in \cite{KP1} that the crossed product by this action is stably isomorphic to $C^*(\Lambda)$. Their proof used a groupoid model for the graph algebra and results of Renault about skew-product groupoids, and an explicit isomorphism was constructed in \cite{KQR}. In the following generalisation of \cite[Theorem~3.1]{KQR}, the existence of an isomorphism follows from \cite[Theorem~5.7]{KP2} or \cite[Corollary~5.1]{PQR} (taking $H=G$), but we want an explicit isomorphism.

\begin{thm}\label{cpofskewp}
Suppose that $\Lambda$ is a row-finite $k$ graph with no sources, and $\eta:\Lambda\to \Gamma$ is a functor into a group $\Gamma$. Then there is an isomorphism $\phi$ of $C^*(\Lambda \times_\eta \Gamma)\rtimes_{\lt_*}\Gamma$ onto $C^*(\Lambda)\otimes\K(l^2(\Gamma))$ such that
\begin{equation}\label{defphi}
\phi(i_{C^*(\Lambda \times_\eta \Gamma)}(s_{(\lambda,g)}))=s_{\lambda}\otimes \chi_g\rho_{\eta(\lambda)}\ \text{ and }\ \overline{\phi}(i_\Gamma(h))=1\otimes \lambda_h.
\end{equation}
\end{thm}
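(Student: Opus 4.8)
The plan is to build $\phi$ from the universal properties of the two factor algebras, check surjectivity by an explicit computation, and then prove injectivity by writing down a two-sided inverse. Throughout write $i:=i_{C^*(\Lambda\times_\eta\Gamma)}$ and $D:=C^*(\Lambda\times_\eta\Gamma)\rtimes_{\lt_*}\Gamma$.

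First I would produce $\phi$ as an integrated form. Put $T_{(\lambda,g)}:=s_\lambda\otimes\chi_g\rho_{\eta(\lambda)}$ in $C^*(\Lambda)\otimes\K(l^2(\Gamma))$ and $U_h:=1\otimes\lambda_h$ in its multiplier algebra. A direct check shows $\{T_{(\lambda,g)}\}$ is a Cuntz--Krieger $(\Lambda\times_\eta\Gamma)$-family: (CK1) and (CK4) reduce to the corresponding relations for $\{s_\lambda\}$ after factoring out the common tensor factor $\chi_g$, while (CK2) and (CK3) come out of $\rho_k\chi_g=\chi_{gk^{-1}}\rho_k$ and the fact that $\rho$ is a homomorphism, the point being that the source-matching condition $u=g\eta(\lambda)$ in the skew product is exactly what makes the product $\chi_g\chi_{u\eta(\lambda)^{-1}}$ nonzero. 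Since $\lambda_h\chi_g=\chi_{hg}\lambda_h$ and $\lambda$ commutes with $\rho$, one gets $U_hT_{(\lambda,g)}U_h^*=T_{(\lambda,hg)}$, which is precisely covariance for $\lt_*$. The universal properties then give a representation $\pi$ of $C^*(\Lambda\times_\eta\Gamma)$ and the homomorphism $\phi=\pi\times U$ from $D$ with the values in \eqref{defphi}; because each $\chi_g\rho_{\eta(\lambda)}$ is a rank-one operator, $\phi$ takes values in $C^*(\Lambda)\otimes\K(l^2(\Gamma))$ itself.

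For surjectivity I would compute the images of the products $i(s_{(\lambda,g)})i_\Gamma(h)$. Writing $\theta_{a,b}$ for the rank-one operator $e_c\mapsto\langle e_c,e_b\rangle\,e_a$, the relations above give $\phi(i(s_{(\lambda,g)})i_\Gamma(h))=s_\lambda\otimes\theta_{g,\,h^{-1}g\eta(\lambda)}$; as $g,h$ range over $\Gamma$ the second index realises every matrix unit, so $s_\lambda\otimes\theta_{a,b}$ lies in the range for all $\lambda$ and all $a,b$. Multiplying two such elements, using $\theta_{a,b}\theta_{c,d}=\delta_{b,c}\theta_{a,d}$, produces every $s_\lambda s_\mu^*\otimes\theta_{a,d}$, and these span a dense subalgebra, so $\phi$ is onto.

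The main obstacle is injectivity. I would not try to push $D$ through a gauge-invariant uniqueness argument: the $\T^k$ gauge action fixes $i_\Gamma(\Gamma)$, so injectivity on its fixed-point algebra would still amount to comparing the full and reduced crossed products of $\Gamma$, which for nonamenable $\Gamma$ needs exactly the regularity this theorem encodes. Instead I would exhibit an explicit inverse, in keeping with the paper's emphasis on explicit formulas. In $M(D)$ set $P_g:=\overline{i}(p_{\Lambda^0\times\{g\}})$, $f_{a,b}:=P_a\,i_\Gamma(ab^{-1})$ and $t_\lambda:=\sum_{c\in\Gamma}i(s_{(\lambda,c)})\,i_\Gamma(c\eta(\lambda)c^{-1})$, the sums converging strictly by Lemma~\ref{defsumpis}. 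Using the covariance relation $i_\Gamma(h)\overline{i}(x)=\overline{i}((\lt_*)_h(x))i_\Gamma(h)$ together with the orthogonality of the $P_g$, one checks that $\{f_{a,b}\}$ are matrix units with $\sum_a f_{a,a}=1$, that $\{t_\lambda\}$ is a Cuntz--Krieger $\Lambda$-family, and --- the crux of the computation --- that only one term survives each product to give $t_\lambda f_{a,b}=f_{a,b}t_\lambda=i(s_{(\lambda,a)})i_\Gamma(a\eta(\lambda)b^{-1})$, so the two families commute. The commuting pair $(t,f)$ then integrates to a homomorphism $\psi:C^*(\Lambda)\otimes\K(l^2(\Gamma))\to M(D)$ whose generators land in $D$, and a check on generators (for instance $\phi(t_\lambda f_{a,b})=s_\lambda\otimes\theta_{a,b}$, and $\psi\circ\phi$ fixing $i(s_{(\lambda,g)})$ and $i_\Gamma(h)$) shows $\psi=\phi^{-1}$, completing the proof. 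Beyond bookkeeping with strict limits, the only real work is verifying the Cuntz--Krieger relations and the commutation for the twisted sum defining $t_\lambda$.
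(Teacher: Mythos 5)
Your proposal is correct and follows essentially the same route as the paper: the same covariant pair $(S_{(\lambda,g)},1\otimes\lambda)$ built from universal properties, the same surjectivity argument via the rank-one operators $\chi_g\rho_{\eta(\lambda)}\lambda_h$, and injectivity via an explicit commuting pair in $M(C^*(\Lambda\times_\eta\Gamma)\rtimes_{\lt_*}\Gamma)$ integrated against the maximal tensor product. Indeed your $f_{a,b}=P_a\,i_\Gamma(ab^{-1})$ and $t_\lambda=\sum_c i(s_{(\lambda,c)})\,i_\Gamma(c\eta(\lambda)c^{-1})$ are exactly the paper's matrix units $u_ay_1u_b^*$ and its $T_\lambda=\overline{i}(s_{\{\lambda\}\times\Gamma})w_{\eta(\lambda)}^{-1}$ with the strict limits written out explicitly, so you have merely unwound the paper's Lemma~\ref{defytimesu} and the multipliers $w_k$ rather than taken a different path.
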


We first show the existence of the homomorphism $\phi$. To do this, we verify the following statements in $C^*(\Lambda)\otimes \K(l^2(\Gamma))$:

\begin{enumerate}
\item\label{CR1} $S_{(\lambda,g)}:=s_{\lambda}\otimes \chi_g\rho_{\eta(\lambda)}$ is a Cuntz-Krieger $(\Lambda \times_\eta \Gamma)$-family;
\smallskip

\item\label{CR2} if $F_n$ and $G_n$ are increasing sequences of finite subsets of $\Lambda^0$ and $\Gamma$  such that $\Lambda^0=\bigcup_{n}F_n$ and $\Gamma=\bigcup_n G_n$, then $\sum_{(v,g)\in F_n\times G_n}S_{(v,g)}$ converges strictly to $1$;

\smallskip
\item\label{CR3} $(1\otimes\lambda_h)S_{(\lambda,g)}=S_{(\lambda,hg)}(1\otimes\lambda_h)$.
\end{enumerate}

To check (CK1) for the family in \eqref{CR1}, we take $(v,g)$ and $(w,h)$ in $(\Lambda\times_\eta \Gamma)^0=\Lambda^0\times\Gamma$: then $\eta(v)=\eta(w)=1$, and $S_{(v,g)}S_{(w,h)}=s_vs_w\otimes \chi_g\chi_h$, which gives (CK1). Next, suppose that $(\lambda,g)$ and $(\mu,h)$ are composable, so that $s(\lambda)=r(\mu)$ and $g\eta(\lambda)=h$. Then $\rho_k\chi_{gk}=\chi_g\rho_k$ implies that
\begin{align*}
S_{(\lambda,g)}S_{(\mu,h)}&=(s_\lambda\otimes \chi_g\rho_{\eta(\lambda)})(s_\mu\otimes \chi_h\rho_{\eta(\mu)})=(s_\lambda s_\mu)\otimes (\chi_g\rho_{\eta(\lambda)}\chi_{g\eta(\lambda)}\rho_{\eta(\mu)})\\
&=s_{\lambda\mu}\otimes(\chi_g\chi_g\rho_{\eta(\lambda)}\rho_{\eta(\mu)})=s_{\lambda\mu}\otimes(\chi_g\rho_{\eta(\lambda\mu)})=S_{(\lambda\mu,g)},
\end{align*}
which is (CK2). A similar calculation gives (CK3), and a calculation using the Cuntz-Krieger relation for $\{s_\lambda\}$ gives (CK4). We have now proved item \eqref{CR1}.

Next observe that
\[
\sum_{(v,g)\in F_n\times G_n}S_{(v,g)}=\Big(\sum_{v\in F_n} s_v\Big)\otimes\Big(\sum_{g\in G_n} \chi_g\Big)=a_n\otimes b_n,
\]
say, and then \eqref{CR2} holds because $\{a_n\}$ and $\{b_n\}$ are approximate identities for $C^*(\Lambda)$ and $\K(l^2(\Gamma))$. Finally, a calculation using $\lambda_h\chi_g=\chi_{hg}\lambda_h$ and $\rho_g\lambda_h=\lambda_h\rho_g$ gives \eqref{CR3}.

Item \eqref{CR1} implies that there is a homomorphism $\pi_S$ from $C^*(\Lambda \times_\eta \Gamma)$ to $C^*(\Lambda)\otimes \K(l^2(\Gamma))$ taking $s_{(\lambda,g)}$ to $S_{(\lambda,g)}$, and \eqref{CR2} then says that $\pi_S$ is nondegenerate. Item \eqref{CR3} implies that $(\pi_S,1\otimes\lambda)$ is a covariant representation of $(C^*(\Lambda \times_\eta \Gamma),\Gamma,\lt_*)$ in $C^*(\Lambda)\otimes \K(l^2(\Gamma))$, and $\phi:=\pi_S\times(1\otimes\lambda)$ satisfies \eqref{defphi}. The image of each spanning element $s_{(\lambda,g)}s_{(\mu,k)}^*i_\Gamma(h)$ belongs to $C^*(\Lambda)\otimes \K(l^2(\Gamma))$, and hence $\phi$ has range in $C^*(\Lambda)\otimes \K(l^2(\Gamma))$.

To see that $\phi$ is surjective, we note that the range of $\phi$ contains every element $s_\lambda\otimes \chi_g\rho_{\eta(\lambda)}\lambda_h$. The operator $\chi_g\rho_{\eta(\lambda)}\lambda_h$ is the rank-one operator $e_g\otimes\overline{e}_{h^{-1}g\eta(\lambda)}$, and for each $\lambda$, each matrix unit $e_p\otimes\overline{e}_q$ arises for a suitable choice of $g$ and $h$. Thus the range of $\phi$ contains every  $s_\lambda\otimes(e_p\otimes\overline{e}_q)$, and every
\[
s_\lambda s_\mu^*\otimes(e_p\otimes\overline{e}_q)=\big(s_\lambda\otimes(e_p\otimes\overline{e}_q)\big)\big(s_\mu\otimes(e_q\otimes\overline{e}_q)\big)^*;
\]
since these elements span a dense $*$-subalgebra of $C^*(\Lambda)\otimes\K(l^2(\Gamma))$, and homomorphisms of $C^*$-algebras have closed range, we deduce that $\phi$ is surjective.

To prove that $\phi$ is injective, we will construct a left inverse for $\phi$. Recall that if $A$ is a $C^*$-algebra then $UA$ denotes the group of unitary elements of $A$.

\begin{lemma}\label{defytimesu}
Suppose that $\{y_g:g\in\Gamma\}$ is a set of mutually orthogonal projections in a $C^*$-algebra $D$, and $u:\Gamma\to UM(D)$ is a homomorphism such that
\begin{equation}\label{commrel1}
u_hy_g=y_{hg}u_h.
\end{equation}
Then there is a homomorphism $y\times u:\K(l^2(\Gamma))\to D$ such that $y\times u(\lambda_h\chi_g)=u_hy_g$.
\end{lemma}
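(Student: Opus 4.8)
The plan is to realise $\K(l^2(\Gamma))$ through its standard system of matrix units and to exhibit a matching system inside $D$. First I would identify the generators: since $\chi_g$ is the rank-one projection $e_g\otimes\overline{e}_g$ and $\lambda_h e_g=e_{hg}$, the operator $\lambda_h\chi_g$ is exactly the matrix unit $e_{hg}\otimes\overline{e}_g$. As $h,g$ range over $\Gamma$ these exhaust all matrix units $e_p\otimes\overline{e}_q$ (take $g=q$ and $h=pq^{-1}$), so prescribing $y\times u$ amounts to specifying its value $E_{p,q}:=u_{pq^{-1}}y_q$ on $e_p\otimes\overline{e}_q$, this being the element forced by the formula $y\times u(\lambda_h\chi_g)=u_hy_g$.

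Next I would verify that $\{E_{p,q}\}$ is a system of matrix units in $D$, that is, $E_{p,q}^*=E_{q,p}$ and $E_{p,q}E_{r,s}=\delta_{q,r}E_{p,s}$. The only tool needed is the commutation relation \eqref{commrel1}, which I would first rewrite as $y_gu_h=u_hy_{h^{-1}g}$. Moving the middle projection past the unitary and then invoking the mutual orthogonality of the $y_g$ collapses the product to a single surviving term precisely when $q=r$, after which the homomorphism property of $u$ reassembles the unitaries into $u_{ps^{-1}}$; the adjoint computation is entirely analogous. Note that each $E_{p,q}=u_{pq^{-1}}y_q$ lies in $D$, not merely in $M(D)$, because $y_q\in D$ and $M(D)D\subseteq D$.

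Finally I would build the homomorphism from these matrix units by the usual block argument. For each finite subset $F\subseteq\Gamma$ the span $M_F:=\lsp\{e_p\otimes\overline{e}_q:p,q\in F\}$ is a full matrix algebra, and the linear map $e_p\otimes\overline{e}_q\mapsto E_{p,q}$ restricts on $M_F$ to a $*$-homomorphism (by the matrix-unit relations just checked), hence is contractive. These maps agree on overlaps, so together they define a single contractive $*$-homomorphism on the dense $*$-subalgebra $\bigcup_F M_F$ of $\K(l^2(\Gamma))$; it extends by continuity to the desired $y\times u$, and by construction $y\times u(\lambda_h\chi_g)=E_{hg,g}=u_hy_g$.

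The one point requiring care, rather than genuine difficulty, is the passage from the algebraic span to all of $\K(l^2(\Gamma))$: since $\Gamma$ may be infinite one cannot treat $\K(l^2(\Gamma))$ as a single matrix algebra, so I would lean on the facts that $*$-homomorphisms out of finite matrix algebras are automatically contractive and that the finite blocks $M_F$ are nested with dense union. Everything else is a direct consequence of \eqref{commrel1} together with the orthogonality of the projections $y_g$.
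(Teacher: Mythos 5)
Your proof is correct and takes essentially the same route as the paper: you identify $\lambda_h\chi_g$ with the matrix unit $e_{hg}\otimes\overline{e}_g$ and exhibit a matching system of matrix units in $D$, your $E_{p,q}=u_{pq^{-1}}y_q$ coinciding with the paper's $e_{p,q}:=u_py_1u_q^*$ via \eqref{commrel1}. The only difference is that where the paper simply cites Corollary~A.9 of \cite{cbms} to pass from matrix units to a homomorphism of $\K(l^2(\Gamma))$, you re-derive that standard fact inline with the finite-block density argument (which also quietly handles the possibility that some $y_g=0$).
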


\begin{proof}
Observe that $e_{g,h}:=u_gy_1u_h^*$ is a set of matrix units in $D$, and thus Corollary~A.9 of \cite{cbms} gives a homomorphism $y\times u:\K(l^2(\Gamma))\to D$ such that $(y\times u)(e_g\otimes\overline e_h)=u_gy_1u_h^*$. Now verify that $\lambda_h\chi_g=e_{hg}\otimes \overline e_g$, and we have $y\times u(\lambda_h\chi_g)=u_{hg}y_1u_g^*=u_hy_g$.
\end{proof}

\begin{lemma}\label{eltsforinv}
Suppose that $\Lambda$, $\Gamma$ and $\eta$ are as in Theorem~\ref{cpofskewp}.

\smallskip
\textnormal{(a)} The elements
\[
y_g:=\overline{i_{C^*(\Lambda\times_\eta\Gamma)}}(p_{\Lambda^0\times\{g\}})\ \text{ and }\ u_h:=i_\Gamma(h)
\]
of $M(C^*(\Lambda \times_\eta \Gamma)\rtimes_{\lt_*}\Gamma)$ satisfy \eqref{commrel1}. The homomorphism $y\times u$ from Lemma~\ref{defytimesu} is nondegenerate; the elements $w_k:=\overline{y\times u}(\rho_k)$ commute with $u_h$ and satisfy
\begin{equation}\label{commrel2}
w_ky_g=y_{gk^{-1}}w_k.
\end{equation}

\textnormal{(b)} The partial isometries
\[
T_\lambda:=\overline{i_{C^*(\Lambda\times_\eta\Gamma)}}(s_{\{\lambda\}\times\Gamma})w_{\eta(\lambda)}^{-1}
\]
commute with every $y_g$, $u_h$ and $w_k$.

\smallskip
\textnormal{(c)}  $\{T_\lambda:\lambda\in\Lambda\}$ is a Cuntz-Krieger $\Lambda$-family in $M(C^*(\Lambda \times_\eta \Gamma)\rtimes_{\lt_*}\Gamma)$.
\end{lemma}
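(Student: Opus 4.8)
The plan is to handle the three parts in order, writing $i:=i_{C^*(\Lambda\times_\eta\Gamma)}$ for the (nondegenerate) embedding, $\overline{i}$ for its strictly continuous unital extension to multipliers, $D_0:=C^*(\Lambda\times_\eta\Gamma)\rtimes_{\lt_*}\Gamma$, and $\tilde s_\lambda:=\overline{i}(s_{\{\lambda\}\times\Gamma})$, so that $T_\lambda=\tilde s_\lambda w_{\eta(\lambda)}^{-1}$ and $T_v=\tilde s_v=\overline{i}(p_{\{v\}\times\Gamma})$ for a vertex $v$ (since $\eta(v)=1$, $w_1=1$). For part (a) I would first deduce \eqref{commrel1} from covariance of $(i,i_\Gamma)$: since $\lt_h(v,g)=(v,hg)$, the automorphism $(\lt_*)_h$ sends the strict limit $p_{\Lambda^0\times\{g\}}$ to $p_{\Lambda^0\times\{hg\}}$, so $u_hy_gu_h^*=y_{hg}$. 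The $y_g$ are mutually orthogonal because distinct vertices of $\Lambda\times_\eta\Gamma$ have orthogonal projections, so Lemma~\ref{defytimesu} yields $y\times u$. Nondegeneracy holds because $\sum_{g\in G_n}y_g=\overline{i}(p_{\Lambda^0\times G_n})$ converges strictly to $\overline{i}(1)=1$ while $\{\chi_{G_n}\}$ is an approximate identity for $\K(l^2(\Gamma))$; hence $y\times u$ extends to $B(l^2(\Gamma))$ and $w_k:=\overline{y\times u}(\rho_k)$ is a unitary. Using $\overline{y\times u}(\lambda_h)=u_h$ and $(y\times u)(\chi_g)=y_g$, I then transport the operator identities $\rho_k\lambda_h=\lambda_h\rho_k$ and $\rho_k\chi_g=\chi_{gk^{-1}}\rho_k$ through the homomorphism $\overline{y\times u}$ to get $w_ku_h=u_hw_k$ and \eqref{commrel2}.

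The computations for (b) and (c) rest on the product formula of Lemma~\ref{defsumpis}. I would record the multiplier identities
\[
\tilde s_\lambda y_g = i(s_{(\lambda,g\eta(\lambda)^{-1})}),\qquad y_g\tilde s_\lambda = i(s_{(\lambda,g)}),\qquad \tilde s_\lambda\tilde s_\mu = \tilde s_{\lambda\mu},\qquad \tilde s_\lambda^*\tilde s_\lambda = T_{s(\lambda)},
\]
each by identifying the relevant product set $VW$ (for instance $(\{\lambda\}\times\Gamma)(\Lambda^0\times\{g\})=\{(\lambda,g\eta(\lambda)^{-1})\}$ and $s(\{\lambda\}\times\Gamma)=\{s(\lambda)\}\times\Gamma$) and applying $\overline{i}$. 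I would also note $\tilde s_\lambda u_h=u_h\tilde s_\lambda$, which holds because $s_{\{\lambda\}\times\Gamma}$ is fixed by $(\lt_*)_h$ (reindexing the strict sum), so covariance gives $u_h\tilde s_\lambda u_h^*=\tilde s_\lambda$.

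The main obstacle is the commutation of $T_\lambda$ with $w_k$ in part (b). The naive route of pushing the $w$'s through $\tilde s_\lambda$ fails, since $\tilde s_\lambda$ does \emph{not} commute with $w_k$ (the indices $\eta(\lambda)k$ and $k\eta(\lambda)$ disagree). My device is to prove it indirectly. Using \eqref{commrel2} and the identities above, $T_\lambda y_g=i(s_{(\lambda,g)})w_{\eta(\lambda)}^{-1}=y_gT_\lambda$; and since $w$ commutes with $u$ and $\tilde s_\lambda$ commutes with $u_h$, also $T_\lambda u_h=u_hT_\lambda$. Hence $T_\lambda$ commutes with every $u_hy_g$, and so with the closed span $(y\times u)(\K(l^2(\Gamma)))$. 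Finally, for $a\in\K(l^2(\Gamma))$,
\[
T_\lambda w_k(y\times u)(a)=T_\lambda(y\times u)(\rho_k a)=(y\times u)(\rho_k a)T_\lambda=w_kT_\lambda(y\times u)(a),
\]
so $T_\lambda w_k-w_kT_\lambda$ annihilates the dense subset $(y\times u)(\K(l^2(\Gamma)))D_0$ of $D_0$; as $D_0$ is essential in $M(D_0)$, this forces $T_\lambda w_k=w_kT_\lambda$. This nondegeneracy argument is where the real content of (b) sits.

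With (b) in hand, part (c) is routine. (CK1) holds since the $T_v=\overline{i}(p_{\{v\}\times\Gamma})$ are projections with $p_{\{v\}\times\Gamma}p_{\{v'\}\times\Gamma}=0$ for $v\neq v'$. For (CK2), functoriality of $\eta$ gives $w_{\eta(\lambda\mu)}^{-1}=w_{\eta(\mu)}^{-1}w_{\eta(\lambda)}^{-1}$, so $T_\lambda T_\mu=\tilde s_\lambda w_{\eta(\lambda)}^{-1}T_\mu=\tilde s_\lambda T_\mu w_{\eta(\lambda)}^{-1}=\tilde s_\lambda\tilde s_\mu w_{\eta(\mu)}^{-1}w_{\eta(\lambda)}^{-1}=T_{\lambda\mu}$, using that $T_\mu$ commutes with $w_{\eta(\lambda)}^{-1}$. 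For (CK3), $T_\lambda^*T_\lambda=w_{\eta(\lambda)}\tilde s_\lambda^*\tilde s_\lambda w_{\eta(\lambda)}^{-1}=w_{\eta(\lambda)}T_{s(\lambda)}w_{\eta(\lambda)}^{-1}=T_{s(\lambda)}$, again by (b). For (CK4) the unitaries cancel: $T_\lambda T_\lambda^*=\tilde s_\lambda\tilde s_\lambda^*$, and since $s_{\{\lambda\}\times\Gamma}s_{\{\lambda\}\times\Gamma}^*=\sum_{t\in\Gamma}s_{(\lambda,t)}s_{(\lambda,t)}^*$ (distinct ranges $(r(\lambda),t)$ kill the cross terms), applying (CK4) in $C^*(\Lambda\times_\eta\Gamma)$ to the degree-$n$ paths with range $(v,t)$ gives $\sum_{\lambda\in v\Lambda^n}s_{(\lambda,t)}s_{(\lambda,t)}^*=s_{(v,t)}$, whence $\sum_{\lambda\in v\Lambda^n}T_\lambda T_\lambda^*=\overline{i}(p_{\{v\}\times\Gamma})=T_v$.
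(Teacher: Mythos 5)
Your proposal is correct and follows essentially the same route as the paper's proof: the relations \eqref{commrel1} and \eqref{commrel2} via covariance and strict limits, the product-set identities from Lemma~\ref{defsumpis}, and above all the indirect argument for part (b) --- prove that $T_\lambda$ commutes with every $y_g$ and $u_h$, deduce commutation with the range of $y\times u$, and then pass to $w_k=\overline{y\times u}(\rho_k)$ using nondegeneracy --- which is exactly the paper's device (the paper says tersely that ``$T_\lambda$ commutes with everything in the ranges of $y\times u$ and $\overline{y\times u}$, including $w_k$''; your density argument just spells out why). Your nondegeneracy argument ($\sum_{g\in G_n}y_g=\overline{i}(p_{\Lambda^0\times G_n})\to 1$ strictly) differs cosmetically from the paper's (which observes $y_g\,s_{(\alpha,g)}s_{(\beta,h)}^*=s_{(\alpha,g)}s_{(\beta,h)}^*$ via \eqref{leftmult}), but both are fine.

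One local slip in your (CK4): the cross terms $s_{(\lambda,t)}s_{(\lambda,u)}^*$ with $t\neq u$ do \emph{not} vanish because the ranges $(r(\lambda),t)$ and $(r(\lambda),u)$ are distinct --- distinct ranges never force $s_\mu s_\nu^*=0$ (e.g.\ for edges $\mu,\nu$ with common source and different ranges, $(s_\mu s_\nu^*)^*(s_\mu s_\nu^*)=s_\nu s_{s(\mu)}s_\nu^*\neq 0$). They vanish because the \emph{sources} are distinct: $s(\lambda,t)=(s(\lambda),t\eta(\lambda))\neq (s(\lambda),u\eta(\lambda))=s(\lambda,u)$, so
\[
s_{(\lambda,t)}s_{(\lambda,u)}^*=s_{(\lambda,t)}\,s_{s(\lambda,t)}\,s_{s(\lambda,u)}\,s_{(\lambda,u)}^*=0
\]
by (CK1) and (CK3). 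With that one-line correction, your direct summation computation of $\sum_{\lambda\in v\Lambda^n}T_\lambda T_\lambda^*=T_v$ is valid, and is a legitimate (slightly more explicit) substitute for the paper's comparison of left-multiplication operators.
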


\begin{proof}
We choose increasing sequences of finite subsets $G_n$ of $\Lambda^0$ and $H_n$ of $\Gamma$ such that $\Lambda^0=\bigcup_n G_n$ and $\Gamma=\bigcup_n H_n$. Then the strict continuity of $\overline{i_{C^*(\Lambda\times_\eta\Gamma)}}$ implies that
\[
i_{C^*(\Lambda\times_\eta\Gamma)}\Big(\sum_{v\in G_n}s_{(v,g)}\Big)\to y_g\ \text{ strictly.}
\]
For each $n$, the covariance of $(i_{C^*(\Lambda\times_\eta\Gamma)},i_\Gamma)$ implies that
\[
u_hi_{C^*(\Lambda\times_\eta\Gamma)}\Big(\sum_{v\in G_n}s_{(v,g)}\Big)=i_{C^*(\Lambda\times_\eta\Gamma)}\Big(\sum_{v\in G_n}s_{(v,hg)}\Big)u_h,
\]
and since the right-hand side converges strictly to $y_{hg}u_h$, \eqref{commrel1} follows.

Since $r(\alpha,g)=(r(\alpha),g)$ belongs to $\Lambda^0\times\{g\}$, the formula \eqref{leftmult} shows that $s_{(\alpha,g)}s_{(\beta,h)}^*=y_gs_{(\alpha,g)}s_{(\beta,h)}^*$, and this implies that $y\times u$ is nondegenerate. So the formula for $w_k$ makes sense. It has the described properties because $\rho_k$ commutes with $\lambda_h$ and satisfies $\rho_k\chi_g=\chi_{gk^{-1}}\rho_k$. We have now proved (a).

The last assertion in Lemma~\ref{defsumpis} implies that
\begin{equation}\label{ygTmu}
y_gT_\lambda=\overline{i_{C^*(\Lambda\times_\eta\Gamma)}}(p_{\Lambda^0\times\{g\}}s_{\{\lambda\}\times \Gamma})w_{\eta(\lambda)}^{-1}=i_{C^*(\Lambda\times_\eta\Gamma)}(s_{(\lambda,g)})w_{\eta(\lambda)}^{-1}.
\end{equation}
On the other hand, \eqref{commrel2} implies that $w_{\eta(\lambda)}^{-1}y_g=y_{g\eta(\lambda)}w_{\eta(\lambda)}^{-1}$, and thus
\[
T_\lambda y_g=\overline{i_{C^*(\Lambda\times_\eta\Gamma)}}(s_{\{\lambda\}\times \Gamma}p_{\Lambda^0\times\{g\eta(\lambda)\}})w_{\eta(\lambda)}^{-1},
\]
which since $s(\lambda,g)=(s(\lambda),g\eta(\lambda))$ is the same as the right-hand side of \eqref{ygTmu}. Thus $y_g$ commutes with $T_\lambda$.

To see that $u_h$ commutes with $T_\lambda$, we realise $s_{\{\lambda\}\times\Gamma}$ as the strict limit of the finite sums $s_{\{\lambda\}\times H_n}:=\sum_{g\in H_n}s_{(\lambda,g)}$. Then $T_\lambda$ is the strict limit of $t_n:=i_{C^*(\Lambda\times_\eta\Gamma)}(s_{\{\lambda\}\times H_n})$, and $u_hT_\lambda$ is the strict limit of $u_ht_n$. Covariance implies that
\begin{equation}\label{uhTmu}
u_ht_n=i_\Gamma(h)i_{C^*(\Lambda\times_\eta\Gamma)}(s_{\{\lambda\}\times H_n})=
i_{C^*(\Lambda\times_\eta\Gamma)}(s_{\{\lambda\}\times hH_n})u_h,
\end{equation}
and since the limit $s_{\{\lambda\}\times\Gamma}$ is independent of the choice of increasing subsets, the right-hand side of \eqref{uhTmu} converges strictly to $T_\lambda u_h$. Thus $u_hT_\lambda=T_\lambda u_h$. Since $T_\lambda$ commutes with everything in the ranges of $y\times u$ and $\overline{y\times u}$, including $w_k$, we have proved (b).

Since $\eta(v)=1$ for every vertex $v$, the relation (CK1) for $\{T_\lambda\}$ follows from the assertion $s_Vs_W=s_{VW}$ in Lemma~\ref{defsumpis}. For (CK2), we suppose $\lambda$ and $\mu$ are composable in $\Lambda$. Then because  $w_{\eta(\lambda)}^{-1}$ and $T_\mu$ commute, we have
\[
T_\lambda T_\mu=\overline{i_{C^*(\Lambda\times_\eta\Gamma)}}(s_{\{\lambda\}\times\Gamma})T_\mu w_{\eta(\lambda)}^{-1}=\overline{i_{C^*(\Lambda\times_\eta\Gamma)}}(s_{\{\lambda\}\times\Gamma}s_{\{\mu\}\times\Gamma})(w_{\eta(\lambda)\eta(\mu)})^{-1},
\]
and the right-hand side reduces to $T_{\lambda\mu}$ because $s_Vs_W=s_{VW}$, $k\mapsto w_k$ is a homomorphism, and $\eta$ is a functor. For (CK3), we need to compute
\[
T_\lambda^*T_\lambda=w_{\eta(\lambda)}\overline{i_{C^*(\Lambda\times_\eta\Gamma)}}(s_{\{\lambda\}\times\Gamma}^*s_{\{\lambda\}\times\Gamma})w_{\eta(\lambda)}^{-1}.
\]
From \eqref{leftmult} and the adjoint of \eqref{rightmult}, we deduce that $(s_{\{\lambda\}\times\Gamma}^*s_{\{\lambda\}\times\Gamma})(s_{(\alpha,g)} s_{(\beta,h)}^*)$ vanishes unless $r(\alpha)=s(\lambda)$, and then is $s_{(\alpha,g)} s_{(\beta,h)}^*$; thus left multiplication by $s_{\{\lambda\}\times\Gamma}^*s_{\{\lambda\}\times\Gamma}$ is the same as left multiplication by $s_{\{s(\lambda)\}\times\Gamma}$, and $s_{\{\lambda\}\times\Gamma}^*s_{\{\lambda\}\times\Gamma}=s_{\{s(\lambda)\}\times\Gamma}$. Thus $T_\lambda^* T_\lambda=w_{\eta(\lambda)}T_{s(\lambda)}w_{\eta(\lambda)}^{-1}$, and since $w_{\eta(\lambda)}$ commutes with $T_{s(\lambda)}$, we recover (CK3). For (CK4) we fix $v\in \Lambda^0$ and $n\in\N^k$, and then
\[
\sum_{\lambda\in v\Lambda^n}T_\lambda T_\lambda^*=\overline{i_{C^*(\Lambda\times_\eta\Gamma)}}\Big(\sum_{\lambda\in v\Lambda^n}s_{\{\lambda\}\times\Lambda}s_{\{\lambda\}\times\Lambda}^*\Big);
\]
a calculation using the formulas in Lemma~\ref{defsumpis}  shows that left multiplication by the inside sum is the same as left multiplication by $s_{\{v\}\times\Gamma}$, and this gives (CK4).
\end{proof}

\begin{proof}[Proof of Theorem~\ref{cpofskewp}]
In the paragraphs following the statement, we constructed $\phi$ and showed it is surjective. For injectivity, we consider the homomorphism
$y\times u:\K(l^2(\Gamma))\to M(C^*(\Lambda \times_\eta \Gamma)\rtimes_{\lt_*}\Gamma)$ associated to the elements $y_g$ and $u_h$ described in Lemma~\ref{eltsforinv}(a), and the homomorphism $\pi_T$ of $C^*(\Lambda)$ into $M(C^*(\Lambda \times_\eta \Gamma)\rtimes_{\lt_*}\Gamma)$ given by the Cuntz-Krieger family $\{T_\lambda\}$ of Lemma~\ref{eltsforinv}. Lemma~\ref{eltsforinv}(b) implies that $\pi_T$ and $y\times u$ have commuting ranges, and hence give a homomorphism  $\theta:=\pi_T\otimes(y\times u)$ of $C^*(\Lambda)\otimes \K(l^2(\Gamma))$ into $M(C^*(\Lambda \times_\eta \Gamma)\rtimes_{\lt_*}\Gamma)$ such that $\theta(a\otimes k)=\pi_T(a)(y\times u)(k)$ (by \cite[Theorem~B.27]{RW}).

Finally we compute, using in particular the formula \eqref{commrel2}:
\begin{align*}
\theta\circ\phi\big(&i_{C^*(\Lambda\times_\eta\Gamma)}(s_{(\lambda,g)})i_\Gamma(h)\big)
=\theta\big(s_\lambda\otimes \chi_g\rho_{\eta(\lambda)}\lambda_h\big)\\
&=\overline{i_{C^*(\Lambda\times_\eta\Gamma)}}(s_{\{\lambda\}\times\Gamma})w_{\eta(\lambda)}^{-1}y_gw_{\eta(\lambda)}u_h=\overline{i_{C^*(\Lambda\times_\eta\Gamma)}}(s_{\{\lambda\}\times\Gamma})
y_{g\eta(\lambda)}w_{\eta(\lambda)}^{-1}w_{\eta(\lambda)}u_h\\
&=\overline{i_{C^*(\Lambda\times_\eta\Gamma)}}\big(s_{\{\lambda\}\times\Gamma}p_{\Lambda^0\times\{g\eta(\lambda)\}}\big)i_\Gamma(h)=i_{C^*(\Lambda\times_\eta\Gamma)}(s_{(\lambda,g)})i_\Gamma(h).
\end{align*}
Since the elements $i_{C^*(\Lambda\times_\eta\Gamma)}(s_{(\lambda,g)})i_\Gamma(h)$ generate the crossed product, this proves that $\theta\circ\phi$ is the identity, and in particular that $\phi$ is injective.
\end{proof}

\section{The main theorem}\label{sec-main}

\begin{thm}\label{mainthm}
Suppose that $\Sigma$ is a row-finite $k$-graph with no sources, and $\alpha$ is a free action of an Ore semigroup $S$ on $\Sigma$ which admits a fundamental domain $F$. Let $q:\Sigma\to S\backslash \Sigma$ be the quotient map, and define $c:S\backslash \Sigma\to F$, $\eta:S\backslash \Sigma\to S$, $\xi:\Sigma\to S$ by
\begin{equation}\label{defcetc}
q(c(\lambda))=\lambda,\ \ s(c(\lambda))=\alpha_{\eta(\lambda)}(c(s(\lambda)))\ \text{ and }\ \sigma=\alpha_{\xi(\sigma)}(c(q(\sigma))).
\end{equation}
Then there is an isomorphism $\psi$ of $C^*(\Sigma)\times_{\alpha_*}S$ onto $C^*(S\backslash \Sigma)\otimes \K(l^2(S))$ such that
\[
\psi(i_{C^*(\Sigma)}(s^\Sigma_\sigma))=s_{q(\sigma)}\otimes(\chi_{\xi(\sigma)}\rho_{\eta(q(\sigma))}|_{l^2(S)})\ \text{ and }\ \overline{\psi}(i_S(u))=1\otimes \lambda^S_u.
\]
\end{thm}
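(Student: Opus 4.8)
The plan is to realise $\psi$ as a composite of three isomorphisms already at our disposal. Throughout write $\Lambda:=S\backslash\Sigma$ and $\Gamma:=S^{-1}S$ for the enveloping group, and extend the functor $\eta:S\backslash\Sigma\to S$ of Theorem~\ref{GTthm} to a functor $\eta:\Lambda\to\Gamma$ by composing with the inclusion $S\hookrightarrow\Gamma$. By Theorem~\ref{GTthm}, $\Sigma$ is isomorphic to the skew product $\Lambda\times_\eta S$, and since $\Lambda\times_\eta S$ is row-finite with no sources precisely when $\Lambda$ is (the projection $(\lambda,t)\mapsto\lambda$ restricts to a bijection of $(v,t)(\Lambda\times_\eta S)^n$ onto $v\Lambda^n$), the standing hypotheses pass to $\Lambda$, so the earlier theorems apply to it.

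\emph{Step 1 (Gross--Tucker).} The graph isomorphism $\phi:\Sigma\to\Lambda\times_\eta S$, $\phi(\sigma)=(q(\sigma),\xi(\sigma))$, of Theorem~\ref{GTthm} satisfies $\phi\circ\alpha_t=\lt_t\circ\phi$. By Lemma~\ref{extendible} it induces an isomorphism $\phi_*:C^*(\Sigma)\to C^*(\Lambda\times_\eta S)$ with $\phi_*(s^\Sigma_\sigma)=s_{(q(\sigma),\xi(\sigma))}$ intertwining $\alpha_*$ and $\lt_*$, and hence by the universal property of the crossed product an isomorphism $\psi_1$ of $C^*(\Sigma)\times_{\alpha_*}S$ onto $C^*(\Lambda\times_\eta S)\times_{\lt_*}S$ carrying $i_{C^*(\Sigma)}(s^\Sigma_\sigma)$ to $i_{C^*(\Lambda\times_\eta S)}(s_{(q(\sigma),\xi(\sigma))})$ and fixing the $i_S(u)$.

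\emph{Step 2 (dilation).} Next I would view $\Omega:=\Lambda\times_\eta S$ as a subgraph of $\Lambda\times_\eta\Gamma$, on which $\Gamma$ acts freely by $\lt$. Because $\eta$ takes values in $S$, the second coordinate of $s(\lambda,t)=(s(\lambda),t\eta(\lambda))$ stays in $S$, so $\Omega$ really is a subgraph; it is saturated because $r(\lambda,g)=(r(\lambda),g)\in\Omega^0$ forces $g\in S$ and hence $(\lambda,g)\in\Omega$. The hypotheses of Corollary~\ref{corlaca} hold: $\lt_u(\Omega)\subset\Omega$ for $u\in S$ since $uS\subset S$, and $\bigcup_{u\in S}\lt_u^{-1}(\Omega)=\Lambda\times_\eta\Gamma$ because the Ore condition $\Gamma=S^{-1}S$ lets us write any $g\in\Gamma$ as $g=a^{-1}b$ with $a,b\in S$, whence $ag=b\in S$. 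Corollary~\ref{corlaca} then yields an isomorphism $\psi_2$ of $C^*(\Lambda\times_\eta S)\times_{\lt_*}S$ onto $p\,(C^*(\Lambda\times_\eta\Gamma)\rtimes_{\lt_*}\Gamma)\,p$, where $p:=\overline{i_{C^*(\Lambda\times_\eta\Gamma)}}(p_{\Lambda^0\times S})$, sending $i_{C^*(\Lambda\times_\eta S)}(s_{(q(\sigma),\xi(\sigma))})$ to $i_{C^*(\Lambda\times_\eta\Gamma)}(s_{(q(\sigma),\xi(\sigma))})$ and $i_S(u)$ to $i_\Gamma(u)p$.

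\emph{Step 3 (corner of the skew-product crossed product) and conclusion.} Finally I would apply Theorem~\ref{cpofskewp} to $\Lambda$ and the group $\Gamma$, giving an isomorphism $\Phi$ of $C^*(\Lambda\times_\eta\Gamma)\rtimes_{\lt_*}\Gamma$ onto $C^*(\Lambda)\otimes\K(l^2(\Gamma))$ with $\Phi(i_{C^*(\Lambda\times_\eta\Gamma)}(s_{(\lambda,g)}))=s_\lambda\otimes\chi_g\rho_{\eta(\lambda)}$ and $\overline{\Phi}(i_\Gamma(h))=1\otimes\lambda_h$. The key computation is that $\overline{\Phi}(p)=1\otimes\chi_S$: indeed $p$ is the strict limit of sums $\sum i_{C^*(\Lambda\times_\eta\Gamma)}(s_{(v,t)})$ with $v\in\Lambda^0$, $t\in S$, and since $\eta(v)=1$ each summand maps under $\Phi$ to $s_v\otimes\chi_t$, and these sum strictly to $1\otimes\chi_S$. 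Hence $\Phi$ restricts to an isomorphism of $p\,(\,\cdots)\,p$ onto the corner
\[
(1\otimes\chi_S)\big(C^*(\Lambda)\otimes\K(l^2(\Gamma))\big)(1\otimes\chi_S)=C^*(\Lambda)\otimes\K(l^2(S)),
\]
using $\chi_S\K(l^2(\Gamma))\chi_S=\K(l^2(S))$ under the identification $l^2(S)=\clsp\{e_t:t\in S\}$. Setting $\psi:=\Phi\circ\psi_2\circ\psi_1$ and tracking a generator through the three steps gives $s^\Sigma_\sigma\mapsto s_{q(\sigma)}\otimes\chi_{\xi(\sigma)}\rho_{\eta(q(\sigma))}$, which lies in the corner because $\xi(\sigma),\eta(q(\sigma))\in S$, while $i_S(u)\mapsto i_\Gamma(u)p\mapsto(1\otimes\lambda_u)(1\otimes\chi_S)$ compresses to $1\otimes\lambda^S_u$; these are exactly the stated formulas.

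The conceptual work is carried by the three cited results, so I expect the main obstacle to be bookkeeping: verifying that $\Omega=\Lambda\times_\eta S$ is a saturated subgraph of $\Lambda\times_\eta\Gamma$ satisfying the dilation hypotheses of Corollary~\ref{corlaca}, where the surjectivity $\bigcup_{u\in S}\lt_u^{-1}(\Omega)=\Lambda\times_\eta\Gamma$ is the point at which the Ore property is indispensable, together with the careful identification $\overline{\Phi}(p)=1\otimes\chi_S$ and the resulting corner $C^*(\Lambda)\otimes\K(l^2(S))$. Once these are settled, matching the generator formulas is a direct substitution.
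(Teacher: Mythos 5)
Your proposal is correct and follows essentially the same route as the paper's proof: the same composite $\psi=\psi_3\circ\psi_2\circ\psi_1$, with Step 1 from Theorem~\ref{GTthm}, Step 2 from Corollary~\ref{corlaca} applied to $\Omega=(S\backslash\Sigma)\times_\eta S$ sitting as a saturated subgraph of $(S\backslash\Sigma)\times_\eta\Gamma$ (verified with the same saturation and Ore arguments), and Step 3 from Theorem~\ref{cpofskewp} together with the corner identification $\overline{\Phi}(p)=1\otimes\chi_S$ and $\chi_S\K(l^2(\Gamma))\chi_S\cong\K(l^2(S))$. The one point you assert that the paper takes care to justify is the strict convergence of the finite sums $\sum p_w\otimes\chi_u$ to $1\otimes\chi_S$, for which the paper proves an auxiliary result (Lemma~\ref{multalgotimes}) establishing strict continuity of $(m,n)\mapsto m\otimes_{\max} n$ on bounded nets of multipliers.
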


We need a general lemma about tensor products of multipliers.

\begin{lemma}\label{multalgotimes}
Suppose that $A$ and $B$ are $C^*$-algebras. For each $m\in M(A)$ and $n\in M(B)$ there is a multiplier $m\otimes_{\max} n$ of $A\otimes_{\max} B$ such that
\begin{equation}\label{deftp}
(m\otimes_{\max} n)(a\otimes b)=ma\otimes nb\ \text{ and }\ (a\otimes b)(m\otimes_{\max} n)=am\otimes bn.
\end{equation}
The map $\iota:(m,n)\mapsto m\otimes_{\max} n$ is strictly continuous in the following weak sense: if $m_i\to m$ strictly in $M(A)$, $n_i\to n$ strictly in $M(B)$, and both $\{m_i\}$ and $\{n_i\}$ are bounded, then $m_i\otimes_{\max}n_i\to m\otimes_{\max} n$ strictly.
\end{lemma}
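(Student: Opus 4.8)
The plan is to realise the putative multiplier concretely as an operator on a Hilbert space, and then read off both assertions. First I would fix a faithful, nondegenerate representation $\Phi$ of $A\otimes_{\max}B$ on a Hilbert space $\H$. By the universal property of the maximal tensor product \cite[Theorem~B.27]{RW}, $\Phi$ has the form $\Phi(a\otimes b)=\pi(a)\sigma(b)$ for a commuting pair of representations $\pi\colon A\to\mathcal B(\H)$ and $\sigma\colon B\to\mathcal B(\H)$. A preliminary point to nail down is that $\pi$ and $\sigma$ are themselves nondegenerate: if $P$ denotes the projection onto the essential subspace $\overline{\pi(A)\H}$ of $\pi$, then $P$ commutes with $\pi(A)$ and, because $\sigma(B)$ commutes with $\pi(A)$, also with $\sigma(B)$, so $\Phi(a\otimes b)(1-P)=\pi(a)(1-P)\sigma(b)=0$ and the nondegeneracy of $\Phi$ forces $P=1$; the argument for $\sigma$ is symmetric. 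Thus $\pi$ and $\sigma$ extend to $\overline\pi\colon M(A)\to\mathcal B(\H)$ and $\overline\sigma\colon M(B)\to\mathcal B(\H)$.

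Next I would check that $\overline\pi(M(A))$ and $\overline\sigma(M(B))$ have commuting ranges. Since $\overline\pi(m)\pi(a)=\pi(ma)$ and $\pi(A)\H$ is dense, the identity $\overline\pi(m)\sigma(b)\pi(a)\xi=\pi(ma)\sigma(b)\xi=\sigma(b)\overline\pi(m)\pi(a)\xi$ shows that $\overline\pi(m)$ commutes with $\sigma(B)$; feeding this back in and using that $\sigma(B)\H$ is dense shows that $\overline\pi(m)$ commutes with $\overline\sigma(n)$. I would then set $T:=\overline\pi(m)\overline\sigma(n)$ and compute $T\Phi(a\otimes b)=\pi(ma)\sigma(nb)=\Phi(ma\otimes nb)$, and symmetrically $\Phi(a\otimes b)T=\Phi(am\otimes bn)$. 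Since the elementary tensors span a dense subalgebra and $\Phi(A\otimes_{\max}B)$ is closed, $T$ lies in the idealiser of $\Phi(A\otimes_{\max}B)$ in $\mathcal B(\H)$; identifying this idealiser with $M(A\otimes_{\max}B)$ (as $\Phi$ is faithful and nondegenerate), $T$ defines a multiplier $m\otimes_{\max}n$ satisfying \eqref{deftp}. The two displayed relations determine it uniquely, so it is independent of the choice of $\Phi$, and the estimate $\|m\otimes_{\max}n\|=\|T\|\le\|m\|\,\|n\|$ falls out.

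For the strict continuity, I would use the standard $\epsilon/3$ reduction: since $\|m_i\otimes_{\max}n_i\|\le\|m_i\|\,\|n_i\|$ is bounded by hypothesis, it suffices to prove strict convergence against the spanning elements $a\otimes b$. There, writing $m_ia\otimes n_ib-ma\otimes nb=(m_ia-ma)\otimes n_ib+ma\otimes(n_ib-nb)$ and using the cross-norm bound $\|x\otimes y\|\le\|x\|\,\|y\|$ together with $m_ia\to ma$ and $n_ib\to nb$ in norm and the boundedness of $\{n_i\}$, one gets $(m_i\otimes_{\max}n_i)(a\otimes b)\to(m\otimes_{\max}n)(a\otimes b)$; the right-multiplication estimate is identical. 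The $\epsilon/3$ argument then upgrades this to arbitrary $c\in A\otimes_{\max}B$.

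The genuinely delicate steps are the two nondegeneracy/commutation points in the first two paragraphs — passing from nondegeneracy of $\Phi$ to that of $\pi$ and $\sigma$, and then showing the extended representations still commute — since these are precisely what is needed to make sense of $\overline\pi(m)\overline\sigma(n)$ as an operator that multiplies $\Phi(A\otimes_{\max}B)$ into itself. Once that operator is in hand, everything else (the product formulas, the norm bound, and the strict continuity) is a routine cross-norm and $\epsilon/3$ computation.
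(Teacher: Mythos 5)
Your proof is correct, but it takes a genuinely different route from the paper's. The paper works entirely abstractly inside $M(A\otimes_{\max}B)$: it invokes the canonical nondegenerate homomorphisms $j_A\colon A\to M(A\otimes_{\max}B)$ and $j_B\colon B\to M(A\otimes_{\max}B)$ of \cite[Theorem~B.27]{RW}, which have commuting ranges and satisfy $j_A(a)j_B(b)=a\otimes b$, extends them to $\overline{j_A}$ and $\overline{j_B}$ on $M(A)$ and $M(B)$, and defines $m\otimes_{\max}n:=\overline{j_A}(m)\overline{j_B}(n)$; then \eqref{deftp} is a one-line computation, and---the real payoff---the strict-continuity assertion is immediate from the strict continuity of the extensions $\overline{j_A},\overline{j_B}$ together with the joint strict continuity of multiplication on bounded sets, with no $\epsilon/3$ argument needed. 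You instead work spatially: a faithful nondegenerate representation $\Phi$, the decomposition into a commuting pair $(\pi,\sigma)$, hands-on nondegeneracy and commutation lemmas, and the identification of $M(A\otimes_{\max}B)$ with the idealiser of $\Phi(A\otimes_{\max}B)$ in $\mathcal{B}(\H)$. All of your steps are sound: the argument via the projection $P$ onto $\overline{\pi(A)\H}$ works because the essential subspace of $\pi$ is reducing for the self-adjoint family $\sigma(B)$, and your commutation checks on the dense subspaces $\pi(A)\H$ and $\sigma(B)\H$ are correct. One remark: the decomposition $\Phi(a\otimes b)=\pi(a)\sigma(b)$ is itself obtained in \cite[Theorem~B.27]{RW} by setting $\pi=\overline{\Phi}\circ j_A$ and $\sigma=\overline{\Phi}\circ j_B$, from which nondegeneracy of $\pi$ and $\sigma$ is automatic, so the first of your ``delicate'' points can be short-circuited---and this observation also shows your construction produces literally the same multiplier as the paper's. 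What your approach buys is self-containedness (modulo the standard idealiser description of multipliers) and an explicit norm bound $\|m\otimes_{\max}n\|\le\|m\|\,\|n\|$, which the paper never states; what the paper's approach buys is economy, especially for strict continuity, which you must establish by the splitting $(m_ia-ma)\otimes n_ib+ma\otimes(n_ib-nb)$ plus the uniform bound and an $\epsilon/3$ upgrade, whereas the paper inherits it directly from the strictly continuous maps $\overline{j_A}$ and $\overline{j_B}$.
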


\begin{proof}
Consider the canonical maps $j_A:A\to M(A\otimes_{\max}B)$ and $j_B:B\to M(A\otimes_{\max}B)$, as in, for example, \cite[Theorem~B.27]{RW}. Then $j_A$ and $j_B$ are nondegenerate homomorphisms with commuting ranges such that $j_A(a)j_B(b)=a\otimes b$ \cite[Theorem~B.27(a)]{RW}. The extensions $\overline{j_A}$ to $M(A)$ and $\overline{j_B}$ to $M(B)$ also have commuting ranges, and hence there is a homomorphism $\overline{j_A}\otimes_{\max}\overline{j_B}$ of $M(A)\otimes_{\max}M(B)$ into $M(A\otimes_{\max}B)$ such that $\overline{j_A}\otimes_{\max}\overline{j_B}(m\otimes n)=\overline{j_A}(m)\overline{j_B}(n)$. We define $m\otimes_{\max}n:=\overline{j_A}\otimes_{\max}\overline{j_B}(m\otimes n)$ Then
\begin{align*}
(m\otimes_{\max}n)(a\otimes b)&=(\overline{j_A}(m)\overline{j_B}(n))(i_A(a)i_B(b))=(\overline{j_A}(m)j_A(a))(\overline{j_B}(n)j_B(b))\\
&=j_A(ma)j_B(nb)=ma\otimes nb,
\end{align*}
and similarly on the other side. Since $\overline{j_A}$ and $\overline{j_B}$ are strictly continuous, $\overline{j_A}(m_i)\to \overline{j_A}(m)$ and $\overline{j_B}(n_i)\to \overline{j_B}(n)$, and the strict continuity of multiplication on bounded sets implies that $m_i\otimes_{\max}n_i=\overline{j_A}(m_i)\overline{j_B}(n_i)$ converges to $\overline{j_A}(m)\overline{j_B}(n)=m\otimes_{\max} n$.
\end{proof}

\begin{remark}
When we apply Lemma~\ref{multalgotimes}, at least one of $A$ or $B$ is nuclear, and $A\otimes_{\max} B$ coincides with the usual spatial tensor product; then,  since there is at most one multiplier satisfying \eqref{deftp}, $m\otimes_{\max} n$ coincides with the usual spatially defined $m\otimes n$. However, $M(A)$ and $M(B)$ need not be nuclear (even for $B=\K(\H)$!), so this observation merely says that $\overline{j_A}\otimes_{\max}\overline{j_B}$ on $M(A)\otimes_{\max} M(B)$ factors through the spatial tensor product.
\end{remark}

\begin{proof}[Proof of Theorem~\ref{mainthm}]
Our Gross-Tucker theorem (Theorem~\ref{GTthm}) describes an isomorphism $\phi$ of $\Sigma$ onto the skew product $(S\backslash\Sigma)\times_\eta S$ such that $\phi\circ\alpha_t={\lt_t}\circ{\phi}$. The induced isomorphism $\phi_*$ of $C^*(\Sigma)$ onto $C^*((S\backslash\Sigma)\times_\eta S)$ satisfies $\phi_*\circ\alpha_*={\lt_*}\circ \phi_*$, and hence induces an isomorphism $\psi_1$ of $C^*(\Sigma)\times_{\alpha_*}S$ onto $C^*((S\backslash\Sigma)\times_\eta S)\times_{\lt_*} S$ satisfying
\[
\psi_1(i_{C^*(\Sigma)}(s^\Sigma_\sigma))=i_{C^*((S\backslash\Sigma)\times_\eta S)}(s_{(q(\sigma),\xi(\sigma))})\ \text{ and }\ \overline{\psi_1}(i_S(u))=i_S(u).
\]

We want to apply Corollary~\ref{corlaca} with $\Lambda=(S\backslash\Sigma)\times_\eta \Gamma$, $\Omega=(S\backslash\Sigma)\times_\eta S$ and $\beta=\lt$. The subgraph $\Omega$ is saturated, because $r(\lambda,g)=(r(\lambda),g)$ belongs to $\Omega^0$ precisely when $g\in S$, in which case $(\lambda,g)$ belongs to $\Omega$. We trivially have $\lt_t(\Omega)\subset\Omega$ for $t\in S$, and because $\Gamma=S^{-1}S$, every $g\in \Gamma$ can be written as $t^{-1}u$ for $t,u\in S$, and then every $(\lambda,g)=\lt_t^{-1}(\lambda,u)$ belongs to $\bigcup_{t\in S}\lt_t^{-1}(\Omega)$. The restriction of $\lt_u$ to $\Omega$ is just the $\lt_u$ in the previous paragraph. So with $p:=\overline{i_{C^*((S\backslash\Sigma)\times_\eta S)}}(p_{(S\backslash\Sigma)^0\times S})$,
Corollary~\ref{corlaca} gives an isomorphism $\psi_2$ of $C^*((S\backslash\Sigma)\times_\eta S)\times_{\lt_*} S$ onto $p\big(C^*((S\backslash\Sigma)\times_\eta \Gamma)\rtimes_{\lt_*} \Gamma\big)p$ such that
\begin{equation}\label{defpsi2}
\psi_2\big(i_{C^*((S\backslash\Sigma)\times_\eta S)}(s_{(\lambda,t)})\big)=
i_{C^*((S\backslash\Sigma)\times_\eta \Gamma)}(s_{(\lambda,t)})\ \text{ and }\ \overline{\psi_2}(i_S(u))=i_\Gamma(u)p.
\end{equation}

Theorem~\ref{cpofskewp} gives an isomorphism $\phi$ of $C^*((S\backslash\Sigma)\times_\eta \Gamma)\rtimes_{\lt_*} \Gamma$ onto $C^*(S\backslash\Sigma)\otimes\K(l^2(\Gamma))$ such that
\begin{equation*}
\phi\big(i_{C^*((S\backslash\Sigma) \times_\eta \Gamma)}(s_{(\lambda,g)})\big)=s_{\lambda}\otimes \chi_g\rho_{\eta(\lambda)}\ \text{ and }\ \overline{\phi}(i_\Gamma(h))=1\otimes \lambda_h.
\end{equation*}
Since $\phi$ is an isomorphism, it extends to the multiplier algebra, and restricts an isomorphism of $p\big(C^*((S\backslash\Sigma)\times_\eta \Gamma)\rtimes_{\lt_*} \Gamma\big)p$ onto $\overline{\phi}(p)\big(C^*((S\backslash\Sigma)\times_\eta \Gamma)\rtimes_{\lt_*} \Gamma\big)\overline{\phi}(p)$.

Again write $(S\backslash\Sigma)^0$ and $S$ as increasing unions $\bigcup_n G_n$ and $S=\bigcup_n H_n$ of finite subsets. Then $p_{(S\backslash\Sigma)^0\times S}$ is by definition the strict limit of $p_n:=\sum_{(w,u)\in G_n\times H_n}p_{(w,u)}$ (see Lemma~\ref{defsumpis}). Thus, since $\eta(w)=1$ for every vertex $w$, $\overline{\phi}(p)$ is the strict limit of
\begin{align*}
\phi\big(i_{C^*((S\backslash\Sigma)\times_\eta \Gamma)}(p_n)\big)&=\sum_{(w,u)\in G_n\times H_n}\phi\big(i_{C^*((S\backslash\Sigma)\times_\eta \Gamma)}(p_{(w,u)})\big)\\
&=\sum_{(w,u)\in G_n\times H_n}p_w\otimes \chi_u\\
&=\Big(\sum_{w\in G_n} p_w\Big)\otimes\Big(\sum_{u\in H_n}\chi_u\Big).
\end{align*}
Since $\sum_{w\in G_n} p_w$ and $\sum_{u\in H_n}\chi_u$ converge strictly to $1_{M(C^*(S\backslash\Sigma))}$ and $\chi_S$, the assertion about strict continuity in Lemma~\ref{multalgotimes} implies that $\overline{\phi}(p)=1_{M(C^*(S\backslash\Sigma))}\otimes\chi_S$. A calculation on elementary tensors shows that
\[
(1\otimes\chi_S)(C^*(S\backslash\Sigma)\otimes \K(l^2(\Gamma))(1\otimes\chi_S)
=C^*(S\backslash\Sigma)\otimes \chi_S\K(l^2(\Gamma))\chi_S.
\]
Since we are identifying $l^2(S)$ with a subspace of $l^2(\Gamma)$ and $\chi_S$ is then the orthogonal projection of $l^2(\Gamma)$ onto $l^2(S)$, $\chi_S\K(l^2(\Gamma)\chi_S$ is naturally identified with $\K(l^2(S))$. When we make this identification, $\chi_S\lambda_u\chi_S$ is the generator $\lambda_u^S:=\lambda_u\chi_S$ of the Toeplitz representation of $S$ on $l^2(S)$. Thus restricting $\phi$ gives an isomorphism
\[
\psi_3:p\big(C^*((S\backslash\Sigma)\times_\eta \Gamma)\rtimes_{\lt_*} \Gamma\big)p\to  C^*(S\backslash\Sigma)\otimes \K(l^2(S))
\]
such that for $t$ and $u$ in $S$,
\[
\psi_3\big(p\,i_{C^*((S\backslash\Sigma)\times_\eta \Gamma)}(s_{(\lambda,t)})p\big)=s_\lambda\otimes (\chi_t\rho_{\eta(\lambda)})|_{l^2(S)}\ \text{ and }\ \overline{\psi_3}(i_\Gamma(u))=\lambda^S_u
\]
(notice that although $\rho_{\eta(\lambda)}$ does not leave $l^2(S)$ invariant, the product $\chi_t\rho_{\eta(\lambda)}$ does).

Now $\psi:=\psi_3\circ\psi_2\circ\psi_1$ has the required properties.
\end{proof}

\begin{cor} \label{dypi}
Suppose that $\Sigma$ is a $2$-graph, $\alpha$ is a free action of an Ore semigroup $S$ on $\Sigma$ which admits a fundamental domain $F$. Then $C^* ( \Sigma ) \times_{\alpha_*} S$ is purely infinite and simple if and only if $C^*(S\backslash \Sigma)$ is purely infinite and simple.
\end{cor}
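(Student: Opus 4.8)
The plan is to deduce Corollary~\ref{dypi} directly from the main theorem by reducing the question of pure infiniteness and simplicity to the algebra $C^*(S\backslash\Sigma)$. By Theorem~\ref{mainthm}, $C^*(\Sigma)\times_{\alpha_*}S$ is isomorphic to $C^*(S\backslash\Sigma)\otimes\K(l^2(S))$. Since pure infiniteness and simplicity are both preserved under isomorphism, it suffices to show that $C^*(S\backslash\Sigma)\otimes\K(l^2(S))$ is purely infinite and simple if and only if $C^*(S\backslash\Sigma)$ is. In other words, the entire content of the corollary is the statement that a $C^*$-algebra $A$ is purely infinite and simple precisely when its stabilisation $A\otimes\K(\H)$ is, applied to $A=C^*(S\backslash\Sigma)$.

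\emph{First} I would record that stability of these two properties under stabilisation by $\K(\H)$ is standard: simplicity of $A$ is equivalent to simplicity of $A\otimes\K(\H)$ (ideals of the stabilisation correspond to ideals of $A$), and pure infiniteness likewise passes to and from the stabilisation. The cleanest way to cite this is to appeal to the Kirchberg-Phillips circle of results, or more elementarily to the fact that $A$ and $A\otimes\K(\H)$ are Morita equivalent and both properties are Morita invariants. I would therefore state the equivalence ``$A$ is purely infinite and simple $\iff$ $A\otimes\K(l^2(S))$ is purely infinite and simple'' as the key general fact, with an appropriate reference, and then apply it to $A=C^*(S\backslash\Sigma)$.

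\emph{Second}, I would note where the hypothesis that $\Sigma$ is a $2$-graph enters. The general machinery of Theorem~\ref{mainthm} holds for $k$-graphs, so the isomorphism with the stabilisation of $C^*(S\backslash\Sigma)$ is available for any $k$. The restriction to $k=2$ must therefore be needed only to guarantee that the dichotomy between pure infiniteness/simplicity and its failure is clean, or that the relevant characterisation of these properties in terms of the quotient graph $S\backslash\Sigma$ is available in the literature precisely for $2$-graphs; I would locate and cite the specific structural result for $2$-graph algebras (such as the aperiodicity/cofinality criterion) that makes ``purely infinite and simple'' a meaningful and decidable condition on $S\backslash\Sigma$.

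\textbf{The main obstacle} I anticipate is not the reduction itself, which is immediate from Theorem~\ref{mainthm}, but pinning down the correct reference for the stability of pure infiniteness under stabilisation by compacts and confirming that $S\backslash\Sigma$ is indeed a row-finite $k$-graph with no sources (so that $C^*(S\backslash\Sigma)$ is a well-behaved graph algebra to which such criteria apply). The first follows from Lemma~\ref{quotient}; the second requires checking that the quotient inherits the no-sources and row-finiteness conditions from $\Sigma$, which is routine given the surjectivity of $q$ and the freeness of the action. With those in hand the corollary is a one-line consequence: $C^*(\Sigma)\times_{\alpha_*}S\cong C^*(S\backslash\Sigma)\otimes\K(l^2(S))$ is purely infinite and simple if and only if $C^*(S\backslash\Sigma)$ is.
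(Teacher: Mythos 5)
Your proposal is correct and follows essentially the same route as the paper: Theorem~\ref{mainthm} gives the stable isomorphism, and the corollary then reduces to the standard fact that simplicity and pure infiniteness are invariant under stable isomorphism (the paper cites \cite[Proposition~4.1.8]{Ror} for pure infiniteness). Your only slip is attributing the invariance fact to Lemma~\ref{quotient}, which merely constructs the quotient $k$-graph; note also that the restriction to $2$-graphs plays no role in the proof itself, exactly as you suspected.
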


\begin{proof}
Both simplicity and pure-infiniteness are preserved by stable isomorphism (by \cite[Proposition~4.1.8]{Ror} for pure infiniteness), so the result follows from Theorem~\ref{mainthm}.
\end{proof}

The point of the Corollary is that $S\backslash \Sigma$ is smaller than $\Sigma$, hence is likely to be more tractable, and we have criteria for deciding whether $C^*(S\backslash \Sigma)$ is purely infinite and simple. We illustrate with a example which is similar to one studied in \cite{KPa}.

\begin{example} \label{ex:app} We consider the graph $\mathbb{F}_\theta^2$ of \cite{KP, DY} associated to the permutation $\theta$ of $\{1,2,3\}\times \{1,2,3\}$
defined by
\[
\begin{array}{l}
\theta ( 2,j ) = (1,j),\ \theta ( 1,j ) = (2,j),\ \theta (3,j) = (3,j) \text{ for } j=1,3, \text{ and } \\
\theta ( i,2 ) = (i,2) ~ \text{ for } i =1,2,3.
\end{array}
\]
As in \cite[Example~5.7]{KPa}, there is a functor $c: \mathbb{F}_\theta^2 \rightarrow \mathbb{Z}^2$ such that $c(g_3)=(0,1)$, $c(f_3)=(1,0)$, and $c(f_i)=(0,0)$, $c(g_i)=(0,0)$ for $i=1,2$. Since this functor takes values in $\N^2$, we can apply Corollary~\ref{dypi} to the action $\lt$ of $\N^2$ on $\mathbb{F}_\theta^2\times_c\N^2$, for which the quotient graph is $\mathbb{F}_\theta^2$. It is shown in \cite[Example 5.7]{KPa} that $\mathbb{F}_\theta^2$ is aperiodic, and since $\mathbb{F}_\theta^2$ has a single vertex, it is trivially cofinal. Thus $C^*(\mathbb{F}_\theta^2)$ is simple by \cite[Theorem 3.4]{RobS}, and purely infinite by \cite[Proposition 8.8]{S}. Thus Corollary~\ref{dypi} implies that $C^* ( \mathbb{F}_\theta^2 \times_c \mathbb{N}^2 ) \times_{\lt_*} \mathbb{N}^2$ is purely infinite and simple. On the other hand, the discussion in \cite[Example~3.5]{KPa} shows that $C^* ( \mathbb{F}_\theta^2 \times_c \mathbb{N}^2 )$ has many ideals.
\end{example}

\end{document}